\newtheorem{thm}{Theorem}[section]
\newtheorem{lemma}[thm]{Lemma}
\newtheorem{cor}[thm]{Corollary}
\theoremstyle{definition}
\newtheorem{defn}[thm]{Definition}
\newtheorem{rmk}[thm]{Remark}
\newtheorem{example}[thm]{Example}
\newcommand{\N}{{\mathbb N}}
\newcommand{\F}{{\mathbb F}}
\newcommand{\NKD}{{\rm NKD}}
\newcommand{\Ext}{\hbox{{\rm Ext}}}
\begin{document}


\title[Splitting Algebras]{Splitting Algebras: Koszul, Cohen-Macaulay and numerically Koszul}

\subjclass[2010]{Primary: 16S37, Secondary: 05E15} 
\keywords{Koszul algebra, splitting algebra, Cohen-Macaulay poset, numerically Koszul, order complex}

\author[  Kloefkorn, Shelton ]{Tyler Kloefkorn and Brad Shelton}
\address{University of Oregon\\
Eugene, Oregon 97403}
\email{tkleofko@uoregon.edu, shelton@uoregon.edu}

%
%
%
%
%
%
%


\begin{abstract}
\baselineskip12pt
We study a finite dimensional quadratic graded algebra $R_\Gamma$ defined from a finite ranked poset $\Gamma$. This algebra has been central to the study of the {\it splitting algebras} $A_\Gamma$  introduced by Gelfand, Retakh, Serconek and Wilson, \cite{GRSW}. Those algebras are known to be quadratic when $\Gamma$ satisfies a combinatorial condition known as {\it uniform}.  A central question in this theory has been: when are the algebras Koszul?  We prove that  $R_\Gamma$ is Koszul and $\Gamma$ is uniform if and only if the poset $\Gamma$ is Cohen-Macaulay.  We also show that the cohomology of the order complex of $\Gamma$ can be identified with certain cohomology groups defined internally to the ring $R_\Gamma$, $H_{R_\Gamma}(n,0)$ (introduced in \cite{CPS}) whenever $\Gamma$ is Cohen-Macualay.  Finally, we settle in the negative the long-standing question: Does numerically Koszul imply Koszul for algebras of the form $R_\Gamma$.  
\end{abstract}
\date{July 6, 2012}

\maketitle



\baselineskip18pt

\section{Introduction}

We fix, once and for ever, a field $\F$.  All topological cohomology groups are calculated with coefficients in $\F$.  

Let $\Gamma$ be a finite ranked poset with unique minimal element $*$ and strict order $<$.  Write $x\to y$ if $x$ covers $y$ in the usual sense.  The $\F$-algebra $R_\Gamma$ has generators $r_x$ for $*\ne x\in \Gamma$ and relations
$$ r_xr_y=0 \hbox{ if } x \not \to y\quad  \hbox{\ and \ } \quad r_x\sum\limits_{x\to y} r_y=0.$$
In the literature this algebra was denoted $B(\Gamma)$ in \cite{RSW3} and \cite{RSW4} and $R(\Gamma)$ in \cite{CPS}.

The quadratic dual of this algebra, $A'_\Gamma=R_\Gamma^!$ also has generators $r_x$ for $*\ne x\in \Gamma$ and relations
$$ r_x r_y = r_x r_z\ \  \hbox{\ whenever \ }\ \  x\to y\  \hbox{\ and \ }x\to z.$$
If $\Gamma$ satisfies a simple combinatorial property known as {\it uniform} (see \ref{uniform}) then $A'_\Gamma$ is a deformation of the 
{\it splitting algebra} $A_\Gamma = A(\Gamma)$ of the poset $\Gamma$, as introduced in \cite{GRSW} (the phrase splitting algebra did 
not come into use until \cite{RSW3} and \cite{CPS}).  Splitting algebras are related to the problem of factoring non-commuting polynomials.  

The two main problems related to these various algebras have been: A) calculate the Hilbert series and B) determine exactly when 
the algebras satisfy the Koszul property.  The first question was answered in the papers \cite{GGRSW}, \cite{RSW1} and \cite{RSW4}.  
Some progress was made on the second question in \cite{CPS}, with a subset of those same results appearing in \cite{RSW3}.  This paper substantially improves on those results.

The paper \cite{CPS} introduced certain cohomology groups internal to the ring $R_\Gamma$: $H_{R_\Gamma}(n,k)$ 
(see \ref{cohomology}).  Our first main theorem shows that these cohomology groups, when $R_\Gamma$ is Koszul, capture all of 
the cohomology of the topological space associated to the order complex of $\Gamma$.

\begin{thm}\label{intro1}
Let  $\Gamma$ be a uniform ranked poset for which $R_\Gamma$ is Koszul and let $Y$ be the total space of the order complex of 
$\Gamma\setminus\{*\}$.  
Then  $H^n(Y) = H_{R_\Gamma}(n,0)$ for all $n$.  
\end{thm}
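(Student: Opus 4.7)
The plan is to exhibit a chain-level model for $H_{R_\Gamma}(n,0)$ that can be identified, differential and all, with the simplicial cochain complex of $Y$. First I would unpack the definition of $H_{R_\Gamma}(n,k)$ from \cite{CPS}: these cohomology groups are computed from a bicomplex built out of tensor powers of $R_\Gamma$ together with its augmentation, and the $k=0$ slice should single out the subcomplex spanned by monomials of maximal ``path length'' in the bigrading. Because any nonzero monomial $r_{x_0}r_{x_1}\cdots r_{x_{n-1}}$ in $R_\Gamma$ is necessarily supported on a covering chain $x_0 \to x_1 \to \cdots \to x_{n-1}$ (the relations $r_xr_y = 0$ for $x \not\to y$ force this), the degree $n$ piece of this subcomplex has a natural basis indexed by covering chains of length $n$ in $\Gamma$.

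Second, I would compare this combinatorial model to the simplicial cochain complex $C^\bullet(Y;\F)$. A covering chain in $\Gamma \setminus \{*\}$ is precisely an ordered simplex of the order complex, and after deleting $*$ and absorbing one rank shift the vector space dimensions already match. The relation $r_x \sum_{x\to y} r_y = 0$, when dualized, is exactly what produces alternating-sum differentials indexed by omitted vertices. This is where the Koszul hypothesis is essential: without Koszulity the internal complex computing $H_{R_\Gamma}(n,0)$ picks up corrections from higher pieces of the Ext-algebra, whereas Koszulity pins it down to its minimal, chain-level form. I would make this precise by using the Koszul resolution of the trivial $R_\Gamma$-module to rewrite $H_{R_\Gamma}(n,0)$ in terms of the quadratic dual $A'_\Gamma = R_\Gamma^!$, whose defining relations $r_xr_y = r_xr_z$ for $x\to y,\,x\to z$ produce precisely the alternating coboundary of simplicial cochains on chains of covers.

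The main obstacle will be carrying out step two carefully: matching differentials with the correct signs and handling the basepoint $*$ consistently. I expect the ``uniform'' hypothesis on $\Gamma$ to enter here, since uniformity is what guarantees that the CPS bicomplex decomposes cleanly along chains of covers and that the dualization of $r_x \sum r_y = 0$ yields the simplicial coboundary without extra terms coming from incompatibilities across ranks. Once this identification of complexes is established, taking cohomology gives $H^n(Y) \cong H_{R_\Gamma}(n,0)$ directly.
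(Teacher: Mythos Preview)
Your proposal has a genuine gap at its core: the two complexes you hope to identify are \emph{not} isomorphic, even as graded vector spaces. A basis of $C^n(Y)$ consists of \emph{all} chains $b_0<b_1<\cdots<b_n$ in $\Gamma\setminus\{*\}$, not just covering chains; the order complex makes no reference to the covering relation. On the other side, $R_\Gamma(n,0)$ is spanned by monomials $r_{b_n}r_{b_{n-1}}\cdots r_{b_0}$ supported on covering chains with $rk(b_n)=n+1$ and $rk(b_0)=1$, and these monomials do not form a basis because of the relations $r_x r_x(1)=0$. So the dimensions are wrong in both directions, and no chain-level isomorphism exists. Your sentence ``a covering chain in $\Gamma\setminus\{*\}$ is precisely an ordered simplex of the order complex'' is the point where the argument breaks.

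The paper's proof works around this by constructing an explicit \emph{epimorphism} $\Phi_\Gamma:C^n(Y)\to R_\Gamma(n,0)$ that sends $(b_0,\ldots,b_n)$ to $r_{b_n}\cdots r_{b_0}$ if $rk(b_0)=1$ and to zero otherwise; this kills every chain that is not a maximal covering chain. One then checks $\Phi_\Gamma$ is a cochain map (using the identity $\sum_{a<x<b} r_b r_x r_a=0$ to absorb the inner insertions of the simplicial differential) and proves it is a quasi-isomorphism by induction on the maximal rank. The Koszul hypothesis enters not to ``pin down'' the complex---$R_\Gamma(\cdot,0)$ and its differential $d_\Gamma$ are defined unconditionally---but to supply the vanishing $\tilde H_\Gamma(n,0)=0$ in the cyclic case via Theorem~\ref{cyclic}, which is what drives the induction. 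Your plan to pass through the Koszul resolution and the quadratic dual $A'_\Gamma$ does not obviously produce the simplicial cochain complex of $Y$, and in any case cannot yield a direct identification of complexes that does not exist.
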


Our second main theorem classifies all ranked posets that are uniform and for which $R_\Gamma$ is Koszul.  Recall that a poset is 
Cohen-Macaulay relative to a field $\F$ if and only if the order complex of any open subinterval $(a,b)$ has non-zero reduced homology 
(or equivalently cohomology) only in the degree equal to its dimension.  

\begin{thm}\label{intro2}Let $\Gamma$ be a finite ranked poset. Then $\Gamma$ is uniform and $R_\Gamma$ is Koszul if and only if 
$\Gamma$ is Cohen-Macaulay. 
\end{thm}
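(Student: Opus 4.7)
The plan is to pivot around Theorem \ref{intro1} and its analogues for closed subintervals of $\Gamma$, turning the topological vanishing condition of Cohen-Macaulay into the algebraic vanishing condition of Koszulness, and vice versa.

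For the direction $(\Rightarrow)$, assume $\Gamma$ is uniform and $R_\Gamma$ is Koszul. The key step is to show that both hypotheses descend to each closed subinterval $[a,b] \subseteq \Gamma$. Uniform-ness should pass to subintervals directly from its combinatorial definition. For Koszulness I would identify $R_{[a,b]}$ as a subquotient of $R_\Gamma$ compatible with the quadratic presentation, and then argue that the quadratic dual $R_{[a,b]}^!$ is correspondingly a subquotient of $R_\Gamma^!$, so Koszul descends. Once the descent is in place, I apply Theorem \ref{intro1} to each open subinterval to obtain $H^n(\Delta((a,b))) = H_{R_{[a,b]}}(n,0)$. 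The final ingredient is that Koszulness of $R_{[a,b]}$ forces $H_{R_{[a,b]}}(n,0)$ to vanish outside the top degree: by the interpretation of $H_R(n,k)$ in \cite{CPS} in terms of graded pieces of $\Ext^n_R(\F,\F)$, the Koszul property gives concentration of this $\Ext$ along the diagonal, and then the Hilbert series formulas of \cite{GGRSW,RSW4} pinpoint the top degree as the only bidegree that can survive.

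For the direction $(\Leftarrow)$, assume $\Gamma$ is Cohen-Macaulay. I would first extract the combinatorial uniform property from the topological CM hypothesis by checking the defining axioms of uniform against the connectivity that CM imposes on open subintervals. To prove Koszulness of $R_\Gamma$, I would proceed by induction on $|\Gamma|$ (or on rank), using the fact that CM is inherited by subintervals. The inductive step would exploit a bar-complex analysis: the bar complex of $R_\Gamma$ decomposes along chains in $\Gamma$, and each local summand is naturally a simplicial cochain complex of an order complex $\Delta((a,b))$. The CM vanishing then translates directly into exactness of the bar complex outside the diagonal, which is precisely the Koszul property, simultaneously re-establishing Theorem \ref{intro1} as a byproduct of the bar computation.

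The main obstacles, in both directions, are the descent of Koszulness to $R_{[a,b]}$ and the matching of the bar complex of $R_\Gamma$ with simplicial cochain complexes of order complexes of subintervals. Since $R_\Gamma$ is not a monomial algebra, no Fr\"oberg-type criterion applies and the specific form of the relation $r_x \sum_{x\to y} r_y = 0$ must be used at each stage. Moreover, since the paper emphasizes that numerically Koszul is strictly weaker than Koszul for these algebras, the argument cannot rest on Hilbert series alone: the CM vanishing must be used to produce actual cocycle-level exactness, not just the correct alternating sum, which is what elevates this theorem above a purely numerical statement.
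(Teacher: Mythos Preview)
Your plan has a genuine gap in the $(\Rightarrow)$ direction: the claim that Koszulness of $R_\Gamma$ descends to $R_{[a,b]}$ via a subquotient argument is not justified. Koszulness does not pass to subquotients of quadratic algebras in any generality, and the fact that the quadratic dual $R_{[a,b]}^!$ might be realized as a subquotient of $R_\Gamma^!$ does not help. Moreover, $[a,b]$ (with $a$ as minimum) is \emph{not} of the form $\Gamma_x$ or $\Gamma^{>k}$ in general, so neither Theorem~\ref{useful} nor any obvious truncation gives you $R_{[a,b]}$ directly. The paper sidesteps this entirely: instead of proving Koszulness for $R_{[a,b]}$, it proves Koszulness for the rank-truncations $\Gamma^{>k}$ (Lemma~\ref{lemma1}), then uses a relative-cohomology decomposition (Lemma~\ref{lemma2}) identifying $H^n(Y,Z)$ with $\bigoplus_{a\in\Gamma(1)}\tilde H^{n-1}(X(a,b))$, where $Y$ and $Z$ are order complexes of $\Gamma'\setminus\{*\}$ and $\Omega'\setminus\{*\}$ for $\Omega=\Gamma^{>1}$. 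The vanishing of $\tilde H^n(X(a,b))$ for rank-one $a$ then drops out of the long exact sequence, and higher-rank $a$ are handled by induction inside $\Omega$. So the paper peels off one bottom layer at a time rather than jumping to an arbitrary interval.

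In the $(\Leftarrow)$ direction your idea that uniformity follows from connectivity of open intervals is correct and matches Lemma~\ref{lemma3}. But the bar-complex decomposition you sketch is problematic: because of the non-monomial relation $r_x\sum_{x\to y}r_y=0$, the bar complex of $R_\Gamma$ does not split cleanly into simplicial cochain complexes of $\Delta((a,b))$; chains with the same top and bottom interact through this relation. The paper instead uses the inductive Koszul criterion of Theorem~\ref{cyclic}: assuming $\Gamma_a$ Koszul for all $a<b$, one must show $H_\Gamma(n,k)=0$ for $0\le k<n\le d-2$. For $k>0$ this is reduced, via $H_\Gamma(n,k)=H_{\Omega}(n-1,k-1)$ with $\Omega=\Gamma^{>1}$, to the inductive hypothesis; for $k=0$ one invokes Theorem~\ref{main1} (already proved) to identify $H_\Gamma(n,0)=H_{\Gamma'}(n,0)$ with $H^n(Y)$, which vanishes by the Cohen--Macaulay hypothesis. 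Thus Theorem~\ref{intro1} is used as an \emph{input} to this direction, not recovered as a byproduct.
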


We note that $A_\Gamma'$ is Koszul if and only if $R_\Gamma$ is Koszul and that either of these algebras being Koszul implies 
$A_\Gamma$ is Koszul (\cite{PP}, 4.7.1).  It is not known to us at this time if the Koszul property for $A_\Gamma$ implies the Koszul 
property for $A_\Gamma'$.  Hence the theorem only answer the Koszul question for $A_\Gamma$ in one direction. 

We further note that the Cohen-Macaulay property is relative to the field $\F$ and hence there exist many examples of posets $\Gamma$ for which the Koszul property of $R_\Gamma$ is relative to $\F$ as well. 

Another outstanding question about the algebras $R_\Gamma$ relates to numerical Koszulity.  A quadratic algebra $R$, with quadratic dual algebra $R^!$, and Hilbert series $H(R,t)$, is said to be {\it numerically Koszul} if
$$H(R,-t)\cdot H(R^!,t) = 1.$$  
All Koszul algebras are numerically Koszul, but the converse is generally false.  It was not known previously if the algebra 
$R_\Gamma$ could be numerically Koszul without being Koszul.  This is answered by the following theorem.

\begin{thm}\label{intro3}
There exist uniform ranked posets $\Gamma$, including cyclic posets, for which the algebra $R_\Gamma$ is numerically Koszul but 
not Koszul.
\end{thm}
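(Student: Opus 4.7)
The plan is to exhibit explicit uniform ranked posets $\Gamma$ for which $R_\Gamma$ fails the Cohen--Macaulay criterion of Theorem \ref{intro2} (hence fails to be Koszul) yet whose Hilbert series pair still satisfies $H(R_\Gamma,-t)\cdot H(R_\Gamma^!,t)=1$. A single well-chosen example suffices for the first assertion; a variant respecting the cyclic hypothesis gives the remainder.

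First, I would invoke the Hilbert series formulas for $R_\Gamma$ and $R_\Gamma^!$ established in \cite{GGRSW,RSW1,RSW4}. For uniform $\Gamma$ these series are determined by alternating chain sums, or equivalently by the reduced Euler characteristics $\tilde\chi(\Delta(a,b))$ of the order complexes of open intervals. Since numerical Koszulity is an identity in Hilbert series, it depends only on this Euler-characteristic data, whereas Cohen--Macaulayness demands that each $\tilde H_\ast(\Delta(a,b);\F)$ be concentrated in the top degree. This mismatch is the gap to exploit.

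Second, I would hunt for a uniform $\Gamma$ containing an interval $(a,b)$ whose order complex is homotopy equivalent to a wedge of spheres in two distinct dimensions $i<j=\dim\Delta(a,b)$, with Betti numbers chosen so that $(-1)^j\tilde b_j+(-1)^i\tilde b_i$ coincides with the reduced Euler characteristic of a Cohen--Macaulay complex of dimension $j$ sharing the same rank statistics. The simplest realization is to modify a Cohen--Macaulay example by a local surgery that introduces one extra $j$-sphere together with one extra $i$-sphere with $j-i$ even, preserving the Euler characteristic while destroying Cohen--Macaulayness. I would work in rank three or four so that the offending interval is small enough to analyze by hand. Given a candidate I would verify (i) uniformity by checking the condition in \ref{uniform} on every interval, (ii) failure of Cohen--Macaulayness over $\F$ by a direct simplicial homology computation, and (iii) numerical Koszulity by substituting the Euler-characteristic data into the Hilbert series formulas. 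The failure of Koszulity then follows from Theorem \ref{intro2}. For the cyclic refinement I would either verify that the constructed $\Gamma$ is already cyclic, or embed the offending interval into a larger cyclic poset built from Cohen--Macaulay scaffolding above and below, which contributes only trivial factors to the Hilbert series computation.

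The principal obstacle will be preserving uniformity. Uniformity imposes strong compatibility between distinct intervals, and the natural surgeries that add off-top homology typically violate these constraints. Consequently the main effort lies in a careful case analysis of small posets, seeking one whose order complex has precisely the ``unbalanced but Euler-neutral'' homology at a single interval while remaining globally uniform. Once such a $\Gamma$ is in hand, the remaining Hilbert-series verification reduces to a finite polynomial identity that is straightforward if tedious.
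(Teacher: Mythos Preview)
Your outline identifies the right gap---numerical Koszulity depends only on Euler-characteristic data while genuine Koszulity needs Cohen--Macaulayness---but the execution plan has concrete problems that would stop it from working.

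First, a sign error: adding an extra $j$-sphere and an extra $i$-sphere with $j-i$ \emph{even} does not preserve the Euler characteristic, since $(-1)^i=(-1)^j$ and both contributions have the same sign.  More importantly, you propose to ``work in rank three or four so that the offending interval is small enough to analyze by hand.''  This cannot succeed: the paper proves (Theorem \ref{rank4}) that for uniform $\Gamma$ of rank at most four, numerically Koszul already implies Koszul.  Any example must have rank at least five.

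The paper's strategy is more structural than your surgery idea.  It introduces the \emph{numerical Koszul defect} $\NKD(\Gamma)=H(R_\Gamma,-t)-H(A_\Gamma,t)^{-1}$, decomposes it as a sum over ``bad'' pairs $(v,i)$ (where the top Betti number of $\Delta(\Gamma_{v,i})$ fails to carry the whole reduced Euler characteristic), and then engineers cancellation among bad pairs rather than trying to make a single interval Euler-neutral.  For the non-cyclic example the key device is a wedge construction $\Gamma\vee_{(v,v')}\Omega$ for which $\NKD$ is additive; one then glues a rank-$5$ poset with $\NKD=-t^5$ (built from a CW model of $S^2\times I$) to another with $\NKD=+t^5$ (built so that an interval is homotopic to $S^1$).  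Neither summand is Cohen--Macaulay, so the wedge is not Koszul, but the defects cancel.  The cyclic example is obtained by an ad hoc rank-$6$ construction in which two distinct bad pairs $(X,5)$ and $(Y,5)$ again cancel; the ``Cohen--Macaulay scaffolding above and below'' idea you sketch is not what is done and would be hard to make work, since adding a new maximal element creates many new intervals whose homology must all be controlled simultaneously.
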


We give a general construction of such examples and use that construction to provide a non-cyclic example in \ref{main3}.  By 
ad-hoc methods we give one specific cyclic example in \ref{wild}.  

\medskip
The defining relations of the algebras $R_\Gamma$ and $A'_\Gamma$  are extremely simple. We find it intriguing that these algebras 
satisfy a theorem analogous to the many classical (see for example \cite{BGSsurvey}, \cite{Reisner} and \cite{Stanley75}) and 
neo-classical (see for example \cite{Polo} and \cite{Woodcock}) theorems that associate the Cohen-Macaulay property to some algebraic 
or homological property of a class of rings.  

The outline of our paper is as follows:  Section 2 provides basic definitions such as uniform and 
$H_{R_\Gamma}(n,k)$ and reviews some results from \cite{CPS}.  Section 3 reviews the definitions of the order complex and the 
associated cohomology groups. Theorem \ref{intro1} is then proved as Theorem \ref{main1}.  Section 5 is a short digression to 
connect our results to those of \cite{RSW4}.  Theorem \ref{intro2} is proved as Theorem \ref{main2}.  We discuss a few  
examples in Section 7. Section 8 of the paper is quite technical and intended for those readers interested in the question of numerical 
Koszulity.  In that section we prove Theorem \ref{intro3} via \ref{main3} and \ref{wild}.

\section{Definitions and preliminaries}

We start with basic notation related to ranked posets.

\begin{defn}  
Let $\Gamma$ be a poset with unique minimal element $*$ and strict order $<$.  We say $\Gamma$ is ranked if for all $b\in \Gamma$, any two maximal chains in $[*,b]$ have the same length.  The length of such a maximal chain is then referred to as the  {\it rank} of $b$ and written $rk_\Gamma(b)$.   Let $\Gamma(k)$ be the elements of $\Gamma$ of rank $k$.  

(1) $\Gamma$ is pure of rank $d$ if $rk_\Gamma(x)= d$ for every maximal element of $\Gamma$.  

(2) $\Gamma_x$ denotes the interval $[*,x]$ in $\Gamma$.

(3) $\Gamma$ is cyclic if $\Gamma = \Gamma_x$ for some $x\in \Gamma$.

(4) For any $x\in \Gamma$, $S_x(k) = \{ y\in \Gamma_x\, |\, rk_\Gamma(y) = rk_\Gamma(x) -k \}$.

\noindent For any $a<b$, we say that $b$ covers $a$, written $b\to a$, if the closed interval $[a,b]$ has order 2, or equivalently $a\in S_b(1)$. This makes $\Gamma$ into a directed graph that is often referred to as a {\it layered} graph.
\end{defn}

We recall the definition of {\it uniform} from \cite{GRSW}.

\begin{defn}\label{uniform} 
Let $\Gamma$ be a ranked poset.  For $x\in \Gamma$ and $a,b \in S_x(1)$, write
$a\sim_x b$ if there exists $c\in S_a(1) \cap S_b(1)$ and extend $\sim_x$ to an equivalence relation on $S_x(1)$.   We say that 
$\Gamma$ is {\it uniform} if, for every $x\in \Gamma$, $\sim_x$ has a unique equivalence class.  
\end{defn}

We next define the $\F$-algebras we are interested in studying. 

\begin{defn}
Let $\Gamma$ be a finite ranked poset.  Let $V_\Gamma$ be the $\F$-vector space with basis elements 
$r_x$, $*\ne x\in \Gamma$.  For each $k\ge 0$ and  $*\ne x\in \Gamma$, set $r_x(k) = \sum\limits_{y\in S_x(k)} r_y$.
Let $I_\Gamma$ be the quadratic ideal of the free (tensor) algebra $\F(V_\Gamma)$ generated by the elements:

(1) $r_x \otimes r_y$ for all pairs $\{x,y\}$ such that $y\not\in S_x(1)$,

(2) $r_x \otimes r_x(1)$ for all $x$.

\noindent Then $R_\Gamma$ is the $\F$-algebra $\F(V_\Gamma)/I_\Gamma$ and we continue to write $r_x$ for the 
generators of $R_\Gamma$.
\end{defn}

The algebra $R_\Gamma$ can be graded in several convenient ways, but we will only use the standard connected grading
 $R_\Gamma= \bigoplus\limits_{n\ge 0}R_{\Gamma,n}$ in which the generators $r_x$ have degree 1.  One fundamental observation about $R_\Gamma$ (cf.~\cite{CPS}) is:
$$ R_{\Gamma,+}=  \bigoplus\limits_{*\ne x\in \Gamma} r_x R_\Gamma.$$
We will use this observation repeatedly and without further comment.

Recall that a graded connected $\F$-algebra is said to be Koszul if the associated bigraded Yoneda algebra $\Ext_R^{*,*}(\F,\F)$ is generated  by $\Ext_R^{1,1}(\F,\F)$ as an $\F$-algebra.  Koszul algebras must be quadratic and there are many equivalent ways to define them (see for example \cite{PP}).

We will say that a finite ranked poset $\Gamma$ is Koszul if the algebra $R_\Gamma$ is Koszul.  We warn the reader that this is an abuse of notation since we know from \cite{CPS} that this definition is dependent on the field $\F$. That is, there are posets $\Gamma$ such that the property ``$R_\Gamma$ is Koszul'' is dependent on the field $\F$.  

The following result from 
\cite{CPS} is extremely useful and will be used repeatedly without further comment.

\begin{thm}\label{useful} If the poset $\Gamma$ is uniform, then 
$\Gamma$ is Koszul if and only if $\Gamma_x$ is Koszul for every $*\ne x\in \Gamma$.
\end{thm}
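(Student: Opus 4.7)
The plan is to reduce Koszulity of $R_\Gamma$ to Koszulity of each $R_{\Gamma_x}$ by exploiting the right-module decomposition $R_{\Gamma,+} = \bigoplus_{*\ne x\in \Gamma} r_x R_\Gamma$ that the paper has just highlighted. Recall that a connected graded algebra $R$ is Koszul iff the trivial module $\F$ admits a linear free resolution, equivalently iff $R_+$ admits a linear right free resolution with generators concentrated in degree $1$. Since the decomposition above is as right $R_\Gamma$-modules, $R_\Gamma$ is Koszul iff each cyclic summand $r_x R_\Gamma$ (generated in degree $1$ by $r_x$) admits a linear $R_\Gamma$-resolution.

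Next I would identify $r_x R_\Gamma$ with an intrinsic module over $R_{\Gamma_x}$. The defining relations of $R_{\Gamma_x}$ are, under the obvious identification of generators, precisely the relations of $R_\Gamma$ that involve only generators $r_y$ with $y\in\Gamma_x\setminus\{*\}$, because the covers $S_y(1)$ agree in $\Gamma$ and in $\Gamma_x$ for such $y$. This produces an algebra map $R_{\Gamma_x}\to R_\Gamma$ onto the sub-algebra $B_x\subseteq R_\Gamma$ generated by those generators. Uniformity of $\Gamma$ enters here to give injectivity: applying the Hilbert series formulas of \cite{GGRSW,RSW1,RSW4} to both $R_\Gamma$ and $R_{\Gamma_x}$ shows that $R_{\Gamma_x}$ and $B_x$ have the same Hilbert series, so they coincide. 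Finally, because $r_x r_z = 0$ whenever $z\notin S_x(1)\subseteq\Gamma_x$, the right $R_\Gamma$-action on $r_x$ already factors through $B_x$, and so $r_x R_\Gamma = r_x B_x$, identified with the principal right ideal $r_x R_{\Gamma_x}\subseteq R_{\Gamma_x}$.

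With this identification in hand, I would show that $r_x R_\Gamma$ has a linear $R_\Gamma$-resolution iff $r_x R_{\Gamma_x}$ has a linear $R_{\Gamma_x}$-resolution, and that this latter condition is in turn equivalent to $R_{\Gamma_x}$ being Koszul. The bridge between the two ambient algebras would come from the fact (again a consequence of uniformity and Hilbert series factorization) that $R_\Gamma$ is free as a right $R_{\Gamma_x}$-module; induction then preserves linearity of resolutions and restriction of scalars sends linear $R_\Gamma$-resolutions to linear $R_{\Gamma_x}$-resolutions. The final equivalence is obtained by running the decomposition $R_{\Gamma_x,+}=\bigoplus_{*\ne y\in\Gamma_x} r_y R_{\Gamma_x}$ inside $R_{\Gamma_x}$ and inducting on $|\Gamma|$, using that intervals of uniform posets are uniform.

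The hardest step I anticipate is the freeness of $R_\Gamma$ as a right $R_{\Gamma_x}$-module. Hilbert-series agreement is only a numerical consistency condition; upgrading it to an actual freeness, and hence to the exactness of restriction of scalars, requires a combinatorial construction of a right $R_{\Gamma_x}$-basis for $R_\Gamma$, and this is the step where uniformity is really doing its work. Without uniformity the Hilbert series of $R_\Gamma$ does not factorize through those of the $R_{\Gamma_x}$, and the iff would not be expected to hold.
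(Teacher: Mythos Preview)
The paper does not actually prove this theorem; it is quoted from \cite{CPS} and used as a black box. So there is no ``paper's own proof'' to compare against here. That said, your proposal has a genuine gap that is worth flagging independently of what \cite{CPS} does.

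The claimed key step, that $R_\Gamma$ is free as a right $R_{\Gamma_x}$-module, is false in general, even for uniform $\Gamma$. Take $\Gamma=\{*,a,b,c\}$ with $a,b\in\Gamma(1)$, $c\in\Gamma(2)$, and $c\to a$, $c\to b$. This is uniform. One checks $\dim_\F R_\Gamma=5$ (basis $1,r_a,r_b,r_c,r_cr_a$), while $R_{\Gamma_a}=\F[r_a]/(r_a^2)$ has dimension $2$. A free right $R_{\Gamma_a}$-module has even $\F$-dimension, so $R_\Gamma$ cannot be free over $R_{\Gamma_a}$. More concretely, the cyclic right $R_{\Gamma_a}$-submodule $r_bR_{\Gamma_a}=\F r_b$ is a copy of the trivial module, not of $R_{\Gamma_a}$. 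Consequently neither direction of your bridge works as stated: restriction of a free $R_\Gamma$-module is not free over $R_{\Gamma_x}$, and there is no reason for induction $-\otimes_{R_{\Gamma_x}}R_\Gamma$ to be exact.

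A smaller point: your argument for injectivity of $R_{\Gamma_x}\to R_\Gamma$ via Hilbert series is circular, since you do not know the Hilbert series of the image $B_x$ in advance. Injectivity is immediate, however, because the assignment $r_y\mapsto r_y$ for $y\le x$ and $r_y\mapsto 0$ otherwise is a well-defined algebra surjection $R_\Gamma\to R_{\Gamma_x}$ splitting your map. This retraction also shows that the right $R_\Gamma$-action on $r_xR_\Gamma$ factors through the \emph{quotient} $R_\Gamma\twoheadrightarrow R_{\Gamma_x}$, not merely through the subalgebra $B_x$; any change-of-rings argument should be organized around that quotient, and will then require control of $R_{\Gamma_x}$ as an $R_\Gamma$-module rather than the (false) freeness of $R_\Gamma$ over $R_{\Gamma_x}$.
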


The Koszul property is closely related to certain cohomology groups built from the ring $R_\Gamma$.  We describe these in the next two definitions.

\begin{defn}
Let $\Gamma$ be a finite uniform ranked poset.

(1) $d_\Gamma = \sum\limits_{*\ne x\in \Gamma} r_x \in R_{\Gamma,1}$.  Also let $d_\Gamma$ denote the function 
$d_\Gamma: R_\Gamma \to R_\Gamma$ given by {\bf left} (but never right) multiplication by $d_\Gamma$.  

(2) For all $n\ge k\ge 0$, set $R_\Gamma(n,k) = \sum\limits_{rk_\Gamma(y) = n+1} r_y R_{\Gamma,n-k}$.

\end{defn}

From the definition of $R_\Gamma$ it is clear that $R_\Gamma$ has a spanning set of monomials of the form $r_{b_1}r_{b_2}\cdots r_{b_j}$ where $b_1\to b_2\to \cdots \to b_j$.   The space $R_\Gamma(n,k)$ is then the span of such monomials for which $rk_\Gamma(b_1) = n+1$ and $rk_\Gamma(b_j) = k+1$. The degree of such a monomial is $n-k+1$. 

From the definitions we see at once that $(d_\Gamma)^2 = 0$.  In particular, for each $k\ge 0$ we have a cochain complex:
$$ \cdots R_\Gamma(n-1,k) \xrightarrow{d_\Gamma} R_\Gamma(n,k)  \xrightarrow{d_\Gamma} R_\Gamma(n+1,k) \cdots $$
It is useful to note the folllowing.  Let $d_\Gamma^n = \sum\limits_{rk(y) = n+1}r_y$, so that $d_\Gamma = \sum_n d_\Gamma^n$. Then the cochain complex above is the same as:
$$ \cdots R_\Gamma(n-1,k) \xrightarrow{d_\Gamma^n} R_\Gamma(n,k)  \xrightarrow{d_\Gamma^{n+1}} R_\Gamma(n+1,k) \cdots $$

\begin{defn}\label{cohomology}
For each $k\ge 0$, we will denote the cohomology of the complex above, $H^n(R_\Gamma(\cdot,k),d_\Gamma)$, by $H_{R_\Gamma}(n,k)$, or more simply as $H_\Gamma(n,k)$.    
\end{defn}

It is sometimes convenient to augment each of the cochain complexes 
$R_\Gamma(\cdot,k)$ by defining  $\F \to R(k,k)$ via $1\mapsto d_\Gamma^k$.  We denote the cohomology of the augemented complex 
by $\tilde H_\Gamma(n,k)$. Please note that this differs from $H_\Gamma(n,k)$ in cohomology degree $k$, not 0. 

The following important theorem from \cite{CPS} explains how the internal cohomology groups $H_\Gamma$ are related to the Koszul property of the algebra 
$R_\Gamma$. This is the primary inductive tool for lifting the Koszul property from rank $d$ to rank $d+1$.

\begin{thm}\label{cyclic}
Assume $\Gamma=\Gamma_x$ is a cyclic uniform ranked poset of with $rk_\Gamma(x)=d+1$.   Then

{\rm (1)} $H_\Gamma(k,k) = \F$ for all $0\le k\le d$.

{\rm (2)} $H_\Gamma(n,k) = 0$ if $n =d$ or $d-1$ and $k<n$.

{\rm (3)} Assume $\Gamma_z$ is Koszul for every $z<x$.  Then $\Gamma$ is Koszul if and only if $H_\Gamma(n,k) = 0$ for all $0\le k<n\le d-2$.  

\end{thm}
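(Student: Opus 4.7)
\textit{Parts (1) and (2).} First, $(d_\Gamma)^2 = 0$ follows by summing the defining relation $r_x \cdot r_x(1) = 0$ over $x$, so the element $d_\Gamma^k \in R_\Gamma(k,k)$ is a cocycle; it is nonzero in cohomology because $R_\Gamma(k-1, k) = 0$, giving the inclusion $\F \cdot d_\Gamma^k \subseteq H_\Gamma(k, k)$. For the reverse inclusion in (1), I would take an arbitrary cocycle $\xi = \sum_{y \in \Gamma(k+1)} c_y r_y$ and observe that $d_\Gamma^{k+1}\xi = 0$ forces $c_y$ to be constant on each fiber $S_z(1)$ for $z \in \Gamma(k+2)$, since the only quadratic relation among $\{r_z r_y : y \in S_z(1)\}$ is the defining relation $r_z \cdot r_z(1) = 0$. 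To promote this fiberwise constancy to global constancy on $\Gamma(k+1)$, I would argue by downward induction on $k$: the base case $k = d$ is trivial because $|\Gamma(d+1)| = 1$, and in the inductive step the statement $H_\Gamma(k+1, k+1) = \F$ ensures that any two elements of $\Gamma(k+2)$ can be connected via common upper covers in $\Gamma(k+3)$, while the uniform condition at each such upper cover $w \in \Gamma(k+3)$ supplies zigzag connectivity inside $S_w(1)$ through common lower covers in $\Gamma(k+1)$. For part (2), the uniqueness of the top element $x$ means that $d_\Gamma^d$ is simply left multiplication by $r_x$, and the computation $r_x \cdot R_\Gamma(d-1, k) = r_x R_{\Gamma, d-k} = R_\Gamma(d, k)$ yields surjectivity, hence $H_\Gamma(d, k) = 0$ for $k < d$. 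The case $n = d - 1$ reduces to showing that any $\xi \in R_\Gamma(d-1, k)$ with $r_x \xi = 0$ is a $d_\Gamma^{d-1}$-coboundary, which again uses $r_x \cdot r_x(1) = 0$ as the lifting mechanism.

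\textit{Part (3).} This is the main content. I would establish a bigraded dictionary identifying $H_\Gamma(n, k)$ with specific pieces of the Yoneda algebra $\Ext^{*, *}_{R_\Gamma}(\F, \F)$, so that the Koszul condition that $\Ext^{p, q} = 0$ for $p \ne q$ translates into the vanishing of appropriate $H_\Gamma(n, k)$ with $k < n$. The direction Koszul $\Rightarrow$ vanishing is then immediate. For the converse, the inductive hypothesis that $\Gamma_z$ is Koszul for every $z < x$ supplies Koszul resolutions of $\F$ over the subalgebras $R_{\Gamma_z}$, and these should assemble into a candidate linear resolution of $\F$ over $R_\Gamma$. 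The exactness of this candidate is measured exactly by the $H_\Gamma(n, k)$ with $k < n$; by part (2) those with $n \in \{d-1, d\}$ vanish automatically, leaving $0 \le k < n \le d - 2$ as the only possible obstruction.

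The principal technical challenge lies in part (3): pinning down the precise bigraded dictionary between the internal cohomology groups $H_\Gamma(n, k)$ and pieces of the Ext algebra of $R_\Gamma$, and then assembling the inductively supplied Koszul resolutions over the subalgebras $R_{\Gamma_z}$ into a single linear resolution of $\F$ over $R_\Gamma$ itself. I expect this will require a filtration or spectral sequence argument on a partial bar complex compatible with the rank stratification of $\Gamma$, with the generators $r_x$ at the top rank supplying the connecting maps whose cocycle condition is precisely the statement of the theorem.
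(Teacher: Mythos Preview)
The paper does not prove this theorem at all: its entire proof is the single sentence ``This is 3.7 and 3.8 of \cite{CPS}.'' So there is no in-paper argument to compare against, and your sketch is being measured against what is in fact a nontrivial external result.

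Your outline for part (1) is correct in spirit and close to what is done in \cite{CPS}: the cocycle condition forces the coefficients $c_y$ to be constant along each $S_z(1)$, and uniformity plus cyclicity supply the connectivity to promote this to a global constant. Your $n=d$ case of part (2) is also fine.

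There are two genuine gaps. First, in the $n=d-1$ case of part (2) you correctly reduce to showing that $r_x\xi = 0$ forces $\xi$ to be a $d_\Gamma^{d-1}$-coboundary, but ``uses $r_x\cdot r_x(1)=0$ as the lifting mechanism'' is not an argument. The actual proof in \cite{CPS} requires knowing a linear basis of $r_xR_\Gamma$ (equivalently, that left multiplication by $r_x$ has kernel exactly the image of $d_\Gamma^{d-1}$ on $R_\Gamma(d-2,k)$), and this is where the uniform hypothesis does real work; without it the statement is false. Second, for part (3) you are candid that you do not have the dictionary in hand. What is actually built in \cite{CPS} is not a spectral sequence on a bar complex but an explicit candidate linear resolution of $\F$ over $R_\Gamma$ whose terms are direct sums of shifts of $r_yR_\Gamma$ and whose differentials are left multiplication by $d_\Gamma$; the complexes $R_\Gamma(\cdot,k)$ are literally the graded pieces of this candidate resolution, so exactness is \emph{equal to} (not merely ``measured by'') the vanishing of $H_\Gamma(n,k)$ for $k<n$. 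The inductive hypothesis enters not by assembling subresolutions, but by guaranteeing that the candidate complex is at least a complex of projectives with the correct Euler characteristic, so that exactness of the complex is equivalent to minimality of the resolution, i.e.\ Koszulity.
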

 
 \begin{proof}  This is 3.7 and 3.8 of \cite{CPS}.
 \end{proof}

\begin{rmk} Despite a remark to the contrary in \cite{CPS}, neither the cyclic hypothesis nor the ``inductive'' hypothesis can be removed from part (3) of \ref{cyclic}.    We give two examples below to illustrate these points.  These examples are not necessary to the rest of the paper. 
\end{rmk}

\begin{example}\label{ex1}  Historically, the first known example of a poset for which $R_\Gamma$ is not Koszul is the poset $\Gamma$ whose Hasse diagram is  shown in the figure below.  Let $\Gamma' = \Gamma\setminus \{X\}$.  One sees directly that the element $r_Dr_G$ in 
$R_{\Gamma'}(1,0) = R_\Gamma(1,0)$ represents a non-zero cohomology class in both $H_{\Gamma'}(1,0)$ and 
$H_{\Gamma}(1,0)$.  Since $R_{\Gamma'}$ is Koszul (because all rank 3 cases are Koszul), this shows that Koszulity, without the cyclic hypothesis, does not guarantee vanishing of cohomology.  The fact that $H_{\Gamma}(1,0)$ is non-zero does, however, prove that $R_\Gamma$ is not Koszul (by (3) of \ref{cyclic}).

\medskip
\begin{center}

\includegraphics[width=1in]{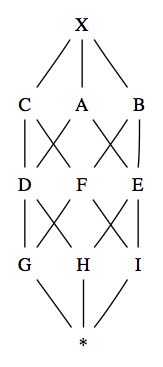}

\end{center}
 
\medskip

\end{example}

It is rather more complex to show that without the inductive hypothesis vanishing of the cohomology groups $H_{R_\Gamma}(n,k)$ for $k<n$ does not imply the Koszul property.  We use the results of \cite{CPS} to build a fairly straightforward cyclic example.

\begin{example}\label{ex2}  Let ${\bf Z}$ and ${\bf Y}$ be two regular CW complexes pictured to the left below and let $\Omega$ be the uniform ranked poset whose Hasse diagram is given to the right below.  

\medskip
\begin{center}

\includegraphics[width=1.6in]{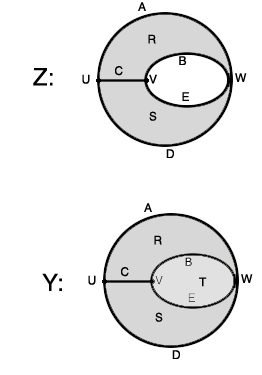}\hskip .4in
\includegraphics[width=1.6in]{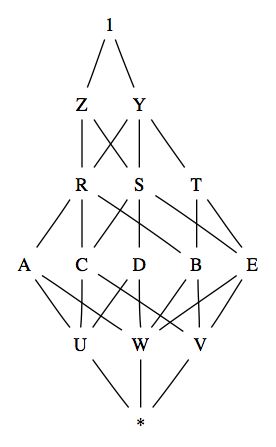}

\end{center}
 
\medskip
We claim that $H_\Omega(n,k)=0$ for all $0\le k<n\le 4$, but $\Omega$ is not Koszul.  

First note that $\Omega_{Z}$ has the form $P\cup \{*,Z\}$, where $P$ is the incidence poset of the CW complex ${\bf Z}$.   Since ${\bf Z}$ is homotopic to $S^1$, but is pure of dimension 2, Corollary 5.6 of \cite{CPS} tells us $\Omega_Z$ is not Koszul.  Hence $\Omega$ is not Koszul. 

On the other hand, $\Omega_Y$ has the form $Q\cup\{*,Y\}$, where $Q$ is the incidence poset of ${\bf Y}$. Since ${\bf Y}$ is a 2-disc, Corollary 5.6 of \cite{CPS}  tells us that $\Omega_{Y}$ is Koszul and then \ref{cyclic} tells us  that 
$H_{\Omega_{Y}}(n,k) = 0$ for all $0\le k<n\le 3$.  Since the element $Y$ majorizes every element of 
$\Omega$ of rank at most 3, and since $H_{\Omega_Y}(2,k)=0$ for $k=0,1$, one can see by inspection that 
$H_\Omega(n,k) = H_{\Omega_{Y}}(n,k)$ for all $0\le k\le n\le 2$.  Combining this with (2) of \ref{cyclic} shows that $H_\Omega(n,k) = 0$ for all $0\le k<n\le 4$, as claimed.
\end{example}

\section{The Order Complex of a Poset}

The notion of the order complex of a finite poset is a standard tool in combinatorial topology and elsewhere.  For completeness of exposition, we include the basic definitions.    

\begin{defn}   Let $\Gamma$ be a finite poset with strict order $<$.  The {\it order complex} of $\Gamma$, $\Delta(\Gamma)$, is the collection of ordered subsets of $\Gamma$:
$$\Delta(\Gamma)= \{ (b_0,b_1,\ldots,b_n)\,|\,  b_i \in \Gamma \hbox{\ and \ } b_0<b_1<\cdots< b_n\}$$ 
 An element $\beta=(b_0,\cdots,b_n)$ in $\Delta(\Gamma)$ is an $n$-cell (or $n$-chain) of the complex, 
 $C^n(\Delta(\Gamma))$ denotes the $\F$-vector space generated by the $n$-cells and $C(\Delta(\Gamma)) = \oplus_nC^n(\Delta(\Gamma))$.  Given $x\in \Gamma$, define 
 $u_x:C^n(\Delta(\Gamma))\to C^{n+1}(\Delta(\Gamma))$ by extending linearly from the formula
$$u_x(b_0,\cdots,b_n) = \begin{cases} 
		(x,b_0,\cdots,b_n) & \hbox{if\ } x<b_0\\ 
		(-1)^{i+1} (b_0,\cdots,b_i,x,b_{i+1},\cdots,b_n) & \hbox{if\ } b_i<x<b_{i+1}\\ 
		(-1)^{n+1}(b_0,\cdots, b_n,x) & \hbox{if\ } b_n<x\\
		0 & \hbox{otherwise\ } \\  
\end{cases}$$
Finally, we set $d_{\Delta(\Gamma)} = d = \sum_x u_x:C(\Delta(\Gamma)) \to C(\Delta(\Gamma))$.
\end{defn}

Since $\Delta(\Gamma)$ is a simplicial complex, it has a well-defined geometric realization, or {\it total space} which we will denote $||\Delta(\Gamma)||$.  We typically give this topological space a name, say $Y = ||\Delta(\Gamma)||$.   We will consistently abuse notation and write $C(Y)$ for $C(\Delta(\Gamma))$ and $d_Y$ for $d_{\Delta(\Gamma)}$.  It is standard that  $(C(Y),d_Y)$ is a cochain complex and that $H^n(Y) = H^n(C(Y),d_Y)$.   We remind the reader that these cohomology groups are all calculated with coefficients in our base field, $\F$.

\section{The First Main Theorem}

Let $\Gamma$ be a finite ranked poset with unique minimal element $*$.   
Let $Y$ be the total space of the order complex $\Delta(\Gamma\setminus\{*\})$.  We define an epimorphism:
$$\Phi_\Gamma: C^n(Y) \to R_\Gamma(n,0)$$
by extending linearly from the formula:
$$ \Phi_\Gamma((b_0,\cdots,b_n)) = \begin{cases}
		r_{b_n}r_{b_{n-1}}\cdots r_{b_0} & \hbox{ if\ } rk_\Gamma(b_0)=1 \\
		0 & \hbox{\ otherwise\ } \\
\end{cases}
$$
We note that $\Phi_\Gamma((b_0,\cdots,b_n)) = 0$ unless $rk_\Gamma(b_n)= n+1$, since otherwise there is some $j$ for which $b_j \not\to b_{j-1}$ and then $r_{b_j}r_{b_{j-1}} =0$.  It may seem odd to utilize a map that annihilates so much information, but it works.

\begin{lemma}  $\Phi_\Gamma: C(Y) \to R_\Gamma(\cdot,0)$ is a cochain epimorphism.
\end{lemma}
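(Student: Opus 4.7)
The plan has three steps: verify well\nobreakdash-definedness, surjectivity, and the cochain identity. Well\nobreakdash-definedness is immediate: if $\Phi_\Gamma((b_0,\ldots,b_n))\ne 0$ then the product $r_{b_n}\cdots r_{b_0}$ is nonzero in $R_\Gamma$, which forces each $b_{i+1}\to b_i$, and combined with $rk(b_0)=1$ gives $rk(b_n)=n+1$, so the monomial lies in $r_{b_n}R_{\Gamma,n}\subseteq R_\Gamma(n,0)$. Surjectivity is almost tautological: the standard spanning set of $R_\Gamma(n,0)$ consists of monomials $r_{c_0}r_{c_1}\cdots r_{c_n}$ along a covering chain from rank $n+1$ down to rank $1$, and each is the image under $\Phi_\Gamma$ of the reversed chain $(c_n,\ldots,c_0)\in C^n(Y)$.

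The heart of the argument is the cochain identity $\Phi_\Gamma d_Y = d_\Gamma \Phi_\Gamma$, which I would verify on each basis chain $\beta=(b_0,\ldots,b_n)$ by a case split on whether $\Phi_\Gamma(\beta)$ is zero. If $\Phi_\Gamma(\beta)\ne 0$, then $rk(b_0)=1$ and each $b_{i+1}$ covers $b_i$, forcing $rk(b_i)=i+1$. No insertion $u_x\beta$ with $x<b_0$ is allowed ($\ast\notin\Gamma\setminus\{\ast\}$), and no middle insertion with $b_i<x<b_{i+1}$ can have nonzero $\Phi_\Gamma$\nobreakdash-image because the cover $b_{i+1}\to b_i$ leaves nothing strictly between. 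Only insertions $x>b_n$ with $x\to b_n$ contribute, and summing them reproduces $d_\Gamma\Phi_\Gamma(\beta)=\sum_{y\to b_n}r_y r_{b_n}\cdots r_{b_0}$, up to the sign $(-1)^{n+1}$ carried by $u_x$.

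When $\Phi_\Gamma(\beta)=0$, I must show $\Phi_\Gamma(d_Y\beta)=0$. If $rk(b_0)>1$, only insertions $x$ with $rk(x)=1$ and $b_0\to x$ give nonzero image, and their sum equals $r_{b_n}\cdots r_{b_0}\cdot r_{b_0}(1)$, which vanishes by the defining relation. Otherwise $rk(b_0)=1$ but some $b_{j+1}\not\to b_j$; take $j$ least. Insertions outside position $j+1$ leave the gap intact and contribute nothing. Insertions in the gap, $b_j<x<b_{j+1}$, require $rk(b_{j+1})-rk(b_j)=2$ and $b_{j+1}\to x\to b_j$, and (using $r_x r_{b_j}=0$ unless $x\to b_j$) their sum telescopes to $r_{b_n}\cdots r_{b_{j+1}}\cdot r_{b_{j+1}}(1)\cdot r_{b_j}\cdots r_{b_0}=0$, again by the defining relation.

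The main obstacle is this last case: one must correctly identify exactly which insertions $u_x\beta$ produce nonzero $\Phi_\Gamma$\nobreakdash-image and then recognize that the resulting sums are engineered to vanish via the cover relation $r_y\cdot r_y(1)=0$. Sign tracking is a secondary concern, handled by carefully matching the position\nobreakdash-dependent signs in the definition of $u_x$ against the sign convention for left multiplication by $d_\Gamma$.
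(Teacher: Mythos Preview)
Your proposal is correct and follows essentially the same approach as the paper: both verify the cochain identity on basis chains and reduce the nontrivial cancellations to the defining relation $r_y r_y(1)=0$. The only difference is organizational. The paper first isolates the single identity $\sum_{a<x<b} r_b r_x r_a = 0$ (valid for all $*\ne a<b$) and then splits into just two cases according to whether $rk(b_0)=1$; with that identity in hand, every ``middle'' insertion sum vanishes at once. You instead split first on whether $\Phi_\Gamma(\beta)$ is already zero, and then further on the cause (either $rk(b_0)>1$ or a least gap at position $j$), rediscovering the same identity locally in your sub-case (b). Both routes are sound; the paper's is a bit more economical, while yours makes the role of the cover chain more explicit.
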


\begin{proof}  We begin with a preliminary observation.  Fix $*\ne a<b$ in $\Gamma$.  We claim: 
$\sum\limits_{a<x<b} r_br_xr_a = 0$ in $R_\Gamma$.  To see this, first observe that we may eliminate from the sum any $x$ that is not in $S_b(1)$ since for such $x$, $r_br_x=0$.  On the other hand, for any $y\in S_b(1)$ for which $a\not<y$, we have $r_yr_a = 0$.  Hence we may add such terms to the sum without changing it.  Hence the sum is the same as $r_b r_b(1) r_a$, which is $0$ by definition.   A similar observation is that $\sum\limits_{*\ne x<b} r_b r_x = 0$.  

Fix $\beta=(b_0,\cdots,b_n)\in C^n(Y)$. Consider first the case when $rk(b_0)>1$, in which case $d_\Gamma \Phi_\Gamma(\beta) = d_\Gamma 0 = 0$.  Then 
$$ \Phi_\Gamma(d_Y(\beta)) =  \sum\limits_{*\ne x<b_0} \Phi_\Gamma(x,b_0,\cdots, b_n) = \sum_{x<b_0,\, rk(x)=1} r_{b_n}\cdots r_{b_0} r_x.$$
If $rk(b_0) >2$ then this sum is $0$ since every term $r_{b_0} r_x = 0$.  If $rk(b_0) = 2$, then this sum is $0$ by the observation above.  Either way, $\Phi_\Gamma(d_Y(\beta)) = (-1)^{n+1} d_\Gamma \Phi_\Gamma(\beta) = 0$.

Consider the case when $rk(b_0) = 1$.  Recalling that $r_yr_{b_n} = 0$ whenever $b_n\not<y$, we get
$$d_\Gamma \Phi_\Gamma(\beta) = d_\Gamma r_{b_n}\cdots r_{b_0} = \sum\limits_{b_n<y} r_yr_{b_n}\cdots r_{b_0}.$$
On the other hand,
\begin{align*}
\Phi_\Gamma(d_Y(\beta)) &=  \sum_i (-1)^{i+1}\sum\limits_{b_i< x<b_{i+1}}  r_{b_n}\cdots r_{b_{i+1}}r_xr_{b_i}\cdots r_{b_0}\\
		&\ \ \ \ \ \ + (-1)^{n+1}\sum\limits_{b_n<x} r_xr_{b_n}\cdots r_{b_0}.
\end{align*}
By the observation above, each sum inside the first term of this expression is $0$.  This shows that $\Phi_\Gamma(d_Y(\beta)) = (-1)^{n+1} d_\Gamma \Phi_\Gamma(\beta)$, as required.
\end{proof}

It is clear that the cochain map $\Phi_\Gamma$ extends to a cochain map between the augmented cochains 
$\F \to C(Y)$ and $\F\to R_\Gamma(\cdot,0)$.

Now we state and prove a more precise version of Theorem \ref{intro1}.

\begin{thm}\label{main1}  Let $\Gamma$ be a finite uniform ranked poset  and $Y=||\Delta(\Gamma\setminus\{*\})||$.  
Assume $R_\Gamma$ is Koszul.   Then the cochain map $\Phi_\Gamma:C^n(Y) \to R_\Gamma(n,0)$ is a quasi-isomorphism.  In particular:
$$ H^n(Y) = H_\Gamma(n,0) \hbox{ for all } n.$$

\end{thm}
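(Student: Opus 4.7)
My plan is to prove Theorem~\ref{main1} by induction on $|\Gamma|$, using short exact sequences obtained by peeling off a maximal element and closing the argument with the five-lemma. The base case $|\Gamma|=1$ is immediate, as both cochain complexes vanish.

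First, I would dispose of the cyclic case $\Gamma=\Gamma_x$ directly. Here $Y$ is a cone with apex $x$ over $||\Delta(\Gamma_x\setminus\{*,x\})||$, hence contractible, giving $H^0(Y)=\F$ and $H^n(Y)=0$ for $n\ge 1$. Theorem~\ref{cyclic} (combined with Theorem~\ref{useful} to ensure every proper subinterval is Koszul) yields the matching algebraic calculation $H_\Gamma(0,0)=\F$ and $H_\Gamma(n,0)=0$ for $n\ge 1$. The cocycle $\sum_{y\ne *}(y)\in C^0(Y)$ generates $H^0(Y)$ (a quick sign check confirms it is a cocycle), and $\Phi_\Gamma$ sends it to $\sum_{rk(y)=1}r_y$, which lies in $\ker d_\Gamma^1$ by the relations $r_zr_z(1)=0$ (summed over $rk(z)=2$) and is nonzero, hence generates $H_\Gamma(0,0)$. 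So $\Phi_\Gamma$ is a quasi-isomorphism in the cyclic case.

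For the inductive step with $\Gamma$ not cyclic, I would pick a maximal element $x$ and set $\Gamma'=\Gamma\setminus\{x\}$ and $\Gamma_x'=\Gamma_x\setminus\{x\}$. Since $x$ is maximal, removing it preserves the uniform condition at every remaining element, and Theorem~\ref{useful} then shows that each of $\Gamma',\Gamma_x,\Gamma_x'$ is uniform and Koszul, with cardinality strictly less than $|\Gamma|$. The subcomplex inclusion $Y'\hookrightarrow Y$ yields the short exact sequence of cochain complexes
$$0 \to C^*(Y,Y') \to C^*(Y) \to C^*(Y') \to 0,$$
with $C^*(Y,Y')$ spanned by simplices containing $x$. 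On the algebraic side, since $x$ is maximal we have $r_zr_x=0$ for every $z$, so the subspace $K^\bullet\subset R_\Gamma(\cdot,0)$ of monomials beginning with $r_x$ is a subcomplex with zero differential, concentrated in degree $d:=rk(x)-1$ with $K^d=r_xR_{\Gamma_x,d}$; the quotient is $R_{\Gamma'}(\cdot,0)$, yielding
$$0 \to K^\bullet \to R_\Gamma(\cdot,0) \to R_{\Gamma'}(\cdot,0) \to 0.$$
The map $\Phi_\Gamma$ is a morphism of short exact sequences. By induction $\Phi_{\Gamma'}$ is a quasi-isomorphism, so the five-lemma reduces the problem to showing that the restriction $\Phi_\Gamma\colon C^*(Y,Y') \to K^\bullet$ is a quasi-isomorphism.

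To close this last step, I would note that, since $x$ is maximal, a simplex of $Y$ contains $x$ if and only if it is a simplex of $Y_x$ containing $x$, giving $C^*(Y,Y')=C^*(Y_x,Y_x')$; similarly $K^\bullet=K_{\Gamma_x}^\bullet$ and the restriction of $\Phi_\Gamma$ coincides with the corresponding restriction of $\Phi_{\Gamma_x}$. Then I would run the identical short-exact-sequence construction for the (cyclic) poset $\Gamma_x$: $\Phi_{\Gamma_x}$ is a quasi-isomorphism by the cyclic case, $\Phi_{\Gamma_x'}$ is one by induction since $|\Gamma_x'|<|\Gamma|$, and a second application of the five-lemma gives that the restriction $\Phi_{\Gamma_x}\colon C^*(Y_x,Y_x')\to K_{\Gamma_x}^\bullet$ is a quasi-isomorphism, completing the proof. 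The principal technical obstacle I anticipate is verifying that the algebraic sequence is genuinely a sequence of cochain complexes: one must confirm that $d_\Gamma$ vanishes on $K^\bullet$ (from $r_zr_x=0$) and that the induced differential on $R_\Gamma(\cdot,0)/K^\bullet$ agrees with $d_{\Gamma'}$ (any $r_xr_y\alpha$-term produced by $d_\Gamma$ is killed modulo $K^\bullet$), along with the natural identifications $r_xR_\Gamma=r_xR_{\Gamma_x}$ and $K^\bullet=K_{\Gamma_x}^\bullet$, both of which flow from the fact that $r_x$ annihilates every generator outside $\Gamma_x$.
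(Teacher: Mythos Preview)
Your proposal is correct and follows essentially the same strategy as the paper: handle the cyclic case directly (contractibility of $Y$ on one side, Theorem~\ref{cyclic} on the other, plus a degree-zero check), and reduce the general case via a short exact sequence of cochain complexes together with the five-lemma.

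The only real difference is the induction scheme. The paper inducts on the maximal rank and removes \emph{all} top-rank elements at once, then proves within its cyclic lemma the extra statements $\tilde H^n(Z)=0$ for $n\ne d-1$ and $\tilde H^{d-1}(Z)\cong R(d,0)$ in order to identify $H^*(Y,Z)$ explicitly. You instead induct on $|\Gamma|$, remove a single maximal element $x$, and replace that explicit computation by a second application of the five-lemma to the pair $(\Gamma_x,\Gamma_x')$. This is a modest but genuine simplification: you never need the analogues of parts (3)--(4) of the paper's cyclic lemma, only the bare quasi-isomorphism statement in the cyclic case. The paper's extra information is not wasted, however, since it feeds directly into the proof of Theorem~\ref{main2}; your route would require reproving those facts separately if you went on to that theorem. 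The technical verifications you flag (that $K^\bullet$ has zero differential, that the quotient complex is $(R_{\Gamma'}(\cdot,0),d_{\Gamma'})$, that $C^*(Y,Y')=C^*(Y_x,Y_x')$, and that $r_xR_\Gamma = r_xR_{\Gamma_x}$) are all routine consequences of $x$ being maximal, exactly as you indicate.
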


\begin{proof}  Let $R=R_\Gamma$ throughout the proof.  Let $d+1$ be the maximal rank of any element of $\Gamma$.  We prove the theorem by induction on $d$.  The case $d=0$ is clear.  Henceforth we assume $d>0$.  We begin by proving a special case of the theorem.  The special case contains substantive extra information.

\begin{lemma}\label{magic}   Let $\Gamma$, $Y$ be as in \ref{main1}, with the additional hypothesis that $\Gamma$ is cyclic, that is
$\Gamma = \Gamma_x$ where $rk_\Gamma(x) = d+1$.   Let $\Gamma' = \Gamma\setminus\{x\}$, let $Z$ be the $(d-1)$-dimensional closed subspace of $Y$ given by $Z = ||\Delta(\Gamma'\setminus\{*\})|| = ||\Delta((*,x))||$.  

{\rm (1)}  $\Phi_\Gamma:C^n(Y) \to R(n,0)$ is a quasi-isomorphism,

{\rm (2)} $\tilde H^n(Y) = \tilde H_\Gamma(n,0) = 0$ for all $n$,

{\rm (3)}  $\tilde H^n(Z) = 0$ for  all $n\ne d-1$,

{\rm (4)} The map $\tilde H^{d-1}(Z) \to R(d,0)$ given by 
$$[(b_0,\cdots,b_{d-1})] \mapsto r_x\Phi_{\Gamma'}(b_0,\cdots,b_{d-1}) = r_xr_{b_{d-1}}\cdots r_{b_0}$$
is an isomorphism. 

\end{lemma}

\begin{proof}
Since $R=R_\Gamma$ is by assumption Koszul, \ref{cyclic} tells us $ \tilde H_\Gamma(n,0) = 0$ for all $n$.  Since $\Gamma$ is cyclic, the space $Y$ is contractible and thus
$\tilde H^n(Y) = 0$ for all $n$.  This proves (2), from which (1) follows trivially. 

Let $R' = R_{\Gamma'}$.  By induction, $\Phi_{\Gamma'}:C^n(Z) \to R'(n,0)$ is a quasi-isomorphism and $\tilde H^n(Z) = \tilde H_{\Gamma'}(n,0)$ for all $n$.  

Define the cochain complex $K$ to be $0\to R(d,0) \to 0$ with the term $R(d,0)$ in degree $d$. We note that $R(n,0) = R'(n,0)$ for all $n<d$, and $R'(d,0)=0$.  Moreover, the maps $d_\Gamma$ and $d_{\Gamma'}$ coincide on the spaces $R(n,0)$ for $n<d-1$. Hence we have short exact sequence of cochains:
$$ 0 \to K \to R(\cdot,0) \to R'(\cdot,0) \to 0$$
The associated long exact sequence in cohomology, together with (2),  yields $H_{\Gamma'}(n,0) =H_{\Gamma}(n,0)= 0$ for all $n<d-1$. Furthermore,  $ H_{\Gamma'}(d-1,0)$ is isomorphic to $R(d,0)$ via the connecting homomorphism.  Composing the connecting homomorphism with the isomorphism $\Phi_{\Gamma'}:\tilde H^{d-1}(Z) \to \tilde H_{\Gamma'}(d-1,0)$ gives exactly the map given in (4).  This proves (3) and (4).
\end{proof}

We return to proving the general case of the Theorem.   Let $\Gamma(d+1) = \{y_1,\ldots,y_s\}$ and set $\Omega = \Gamma \setminus \Gamma(d+1)$.  We define closed subspaces of $Y$:  $Y_i = ||\Delta((*,y_i])||$ for $1\le i\le s$ and 
$Z =||\Delta(\Omega\setminus\{*\})||$.  Note that for each $i$,  $Z\cap Y_i = ||\Delta((*,y_i))||$, so that Lemma \ref{magic} applies to the pair $(Y_i,Z\cap Y_i)$.   

Consider the relative cochain complex $C(Y,Z)$.  The basis elements of $C^{n+1}(Y,Z)$ are those 
$n+1$-cells $(b_0,\cdots,b_{n+1})$ in $C^{n+1}(Y)$ for which $b_{n+1} = y_i$ for some $i$, in 
which case $(b_0,\cdots,b_n)$ is in $C^n(Z\cap Y_i)$.  Hence there is a vector space isomorphism 
$\zeta: \oplus_i C^n(Z\cap Y_i) \to C^{n+1}(Y,Z)$ given by mapping $(b_0,\cdots,b_n)$ in 
$C^n(Z\cap Y_i)$ to $(b_0,\ldots,b_n,y_i)$.  We also define the isomorphism 
$\zeta: \F^s \to C^0(Y,Z)$ by $\zeta(e_i) = (y_i)$.  Finally, define an augmentation 
$\F^s \to \oplus_iC^0(Z\cap Y_i)$ via $e_i \mapsto \sum\limits_{b<y_i} (b)$ in $C^0(Z\cap Y_i)$.  

Using the fact that each $y_i$ is maximal in $\Gamma$, it is a straightforward calculation to see that $\zeta$ is a degree +1 cochain map between the augmented cochain complex $\F^s \to \oplus_i C(Z\cap Y_i)$ and the complex $C(Y,Z)$.  Hence $\zeta$ is an isomorphism of cochain complexes.  Thus 
$H^{n+1}(Y,Z) = \bigoplus_i \tilde H^n(Z\cap Y_i)$ for all $n$. By Lemma \ref{magic}, we then 
have $H^n(Y,Z) = 0$ for all $n<d$.  Since $R(d,0) = \bigoplus_i r_{y_i}R(d-1,0) = \bigoplus_i R_{\Gamma_{y_i}} (d,0)$, \ref{magic} also shows us that $H^d(Y,Z)$ is isomorphic to $R(d,0)$, via the map $[\beta=(b_0,\cdots,b_{d-1},y_i)] \mapsto r_{y_i}r_{b_{d-1}}\cdots r_{b_0} = \Phi_\Gamma(\beta)$.  

Let $K$ be the cochain complex $0\to R(d,0) \to 0$, concentrated in degree $d$.  Exactly as in the proof of \ref{magic} we have a short exact sequence of cochain complexes:  $0\to K\to R(\cdot,0) \to R_\Omega(\cdot,0) \to 0$.   For any $n<d$, let $\hat \Phi_\Gamma$ be the restriction of 
$\Phi_\Gamma$ to  $C^n(Y,Z)$.  By the note just after the definition of $\Phi_\Gamma$, we see 
$\hat\Phi_\Gamma(C^n(Y,Z)) = 0$ for all $n<d$.  
   
Using the last observation, we see that we have a commutative diagram of cochain complexes:
\begin{diagram}
0&  \longrightarrow&C(Y,Z) & \longrightarrow&C(Y)  & \longrightarrow&C(Z) & \longrightarrow & 0 \\
&& \dTo_{\hat\Phi_\Gamma} &&\dTo_{\Phi_\Gamma} && \dTo_{\Phi_\Omega}\\
0& \longrightarrow&K & \longrightarrow&R(\cdot,0)  & \longrightarrow&R_\Omega(\cdot,0) & \longrightarrow & 0 \\
\end{diagram}
By induction, $\Phi_\Omega$ is a quasi-isomorphism.  By the previous paragraph, 
$\hat\Phi_\Gamma$ is  a quasi-isomorphism.  Thus $\Phi_\Gamma$ is a quasi-isomorphism.  This completes the proof of \ref{main1}
\end{proof}

\section{Connection to a Theorem of Retakh, Serconek and Wilson}

This section is a brief digression in order make a connection between our methods and a very good result: Proposition 3.2.1 
of \cite{RSW4}.  The basic idea of this section is to see just how far one can push the techniques of the previous section without 
the Koszul hypothesis.  We will prove a weaker version of \ref{main1}, from which we get a corollary that is equivalent to 3.2.1 
of \cite{RSW4}.   

\begin{thm}\label{RSW}   Let $\Gamma = \Gamma_x$ be a uniform cyclic ranked poset with $rk_\Gamma(x) = d+1$ and set 
$\Gamma' = \Gamma\setminus\{x\}$.  Let $Z$ be the total space of the order complex $\Delta(\Gamma'\setminus\{*\})$.  Then:

(1)  The cochain epimorphism 
$\Phi_{\Gamma'}: C^n(Z) \to R_{\Gamma'}(n,0)$, as described in the previous section, induces an isomorphism in cohomology in degree $d-1$, that is
$$  H^{d-1}(Z) \cong H_{\Gamma'}(d-1,0).$$
(2)  The map $\tilde H^{d-1}(Z) \to R_\Gamma(d,0)$ given by 
$$[(b_0,\cdots,b_{d-1})] \mapsto r_x\Phi_{\Gamma'}(b_0,\cdots,b_{d-1}) = r_xr_{b_{d-1}}\cdots r_{b_0}$$
is an isomorphism.   
\end{thm}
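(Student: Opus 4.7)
The strategy is to push the techniques from the proof of Lemma \ref{magic}(3)--(4) as far as possible without the Koszul hypothesis, relying only on the unconditional vanishing provided by Theorem \ref{cyclic}(2). Write $R = R_\Gamma$ and $R' = R_{\Gamma'}$. On the algebra side, since $R(n,0) = R'(n,0)$ for $n < d$ and $R'(d,0) = 0$, the complex $K = R(d,0)$ (concentrated in degree $d$) fits into a short exact sequence $0 \to K \to R(\cdot,0) \to R'(\cdot,0) \to 0$. Theorem \ref{cyclic}(2) gives $H_\Gamma(d-1,0) = H_\Gamma(d,0) = 0$ without any Koszul assumption, so the associated long exact sequence produces a connecting isomorphism $\delta_{bot}\colon H_{\Gamma'}(d-1,0) \xrightarrow{\cong} R(d,0)$, explicitly $[\rho] \mapsto r_x\rho$.

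On the topological side, because $x$ is the unique maximum of $\Gamma$, the order complex $\Delta(\Gamma\setminus\{*\})$ is a cone with apex $x$, so $Y$ is contractible and the long exact sequence of the pair $(Y,Z)$ gives $\delta_{top}\colon \tilde H^{d-1}(Z) \xrightarrow{\cong} H^d(Y,Z)$. The restriction of $\Phi_\Gamma$ to $C(Y,Z)$ lands in $K$ (it vanishes in degrees $< d$ and hits $R(d,0)$ in degree $d$), giving a cochain map $\hat\Phi\colon C(Y,Z) \to K$ for which the diagram of short exact sequences of cochain complexes commutes. Tracing the connecting homomorphisms through that diagram identifies the map in (2) with $\delta_{bot} \circ H^{d-1}(\Phi_{\Gamma'})$ (up to a sign of $(-1)^{d-1}$). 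In particular (1) and (2) are equivalent, and it suffices to prove that $H^d(\hat\Phi)\colon H^d(Y,Z) \to R(d,0)$ is an isomorphism, after which composition with $\delta_{top}$ recovers the map of (2).

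The essential input for the remaining step is a pigeonhole argument on ranks. A $d$-cell of $C^d(Y,Z)$ consists of $d+1$ elements with strictly increasing ranks in $\{1,\ldots,d+1\}$ ending at $x$, so it must be a maximal chain $(b_0,\ldots,b_{d-1},x)$ with $rk_\Gamma(b_i) = i+1$. Hence $C^d(Y,Z) = V_{\max}$ and $C^{d+1}(Y,Z) = 0$, so $H^d(Y,Z) = V_{\max}/\mathrm{im}(d^{d-1})$. Likewise each $(d-1)$-cell omits exactly one rank $j \in \{1,\ldots,d\}$, and its coboundary is the signed sum $\pm\sum_{y} (b_0,\ldots,b_{j-2},y,b_{j-1},\ldots,b_{d-2},x)$ taken over the rank-$j$ elements $y$ that fill the gap -- a ``local swap'' in $V_{\max}$. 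Applying $\hat\Phi$ to such a sum yields $\pm r_x r_{b_{d-2}} \cdots r_{b_{j-1}} \cdot r_{b_{j-1}}(1) \cdot r_{b_{j-2}} \cdots r_{b_0}$ (or the analogous expression using $r_x r_x(1)$ when $j=d$), which vanishes in $R(d,0)$ by the defining relations. Thus $\mathrm{im}(d^{d-1}) \subseteq \ker(\hat\Phi)$.

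The main obstacle is the reverse inclusion: every element of $V_{\max}$ that vanishes in $R(d,0)$ should already be a coboundary. This reduces to the claim that the only defining relations of $R$ producing genuine relations among maximal chain monomials in $R(d,0)$ are the local swaps. The quadratic relations $r_ar_b = 0$ with $b \not\in S_a(1)$ never contribute, as such a product breaks the cover condition and so never appears inside a monomial supported on a maximal chain; the relations $r_ar_a(1) = 0$, pre- and post-multiplied by partial chain monomials so as to land in $V_{\max}$, produce exactly the local-swap sums described above. Granting this identification, $\hat\Phi$ descends to an isomorphism $H^d(Y,Z) \xrightarrow{\cong} R(d,0)$, and composition with $\delta_{top}$ is (up to sign) the map of part (2), establishing both parts of the theorem.
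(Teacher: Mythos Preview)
Your approach is genuinely different from the paper's. The paper proceeds by induction on $d$: it strips off the rank-$d$ elements $y_1,\ldots,y_s$ of $\Gamma'$, forms an auxiliary cyclic poset $\Omega = \Omega'\cup\{\bar 1\}$, and compares the long exact sequences of the pairs $(C(Z),C(W))$ and $(R_{\Gamma'}(\cdot,0),R_{\Omega'}(\cdot,0))$. Two of the three vertical arrows in the resulting ladder are isomorphisms by the inductive hypothesis applied to $(\Omega,\Omega')$ and to each $((\Gamma')_{y_i},(*,y_i))$, which forces the third. You instead attempt a one-shot, non-inductive identification $H^d(Y,Z)\cong R(d,0)$ via an explicit presentation of $R(d,0)$ by maximal-chain generators and ``local-swap'' relations, and then read off (1) and (2) from the commuting square of connecting homomorphisms. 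If completed, your route yields as a byproduct the concrete statement that $R_\Gamma(d,0)$ is the free space on maximal chains of $\Gamma$ modulo the local swaps; the paper's induction avoids touching the ideal directly but requires constructing $\Omega$ and checking it is uniform.

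There is, however, a real gap at the reverse inclusion $\ker(\hat\Phi|_{V_{\max}})\subseteq \mathrm{im}(d^{d-1})$. Your sentence ``the quadratic relations $r_ar_b=0$ with $b\notin S_a(1)$ never contribute, as such a product \ldots never appears inside a monomial supported on a maximal chain'' is true of each \emph{individual} generator $u\otimes r_ar_b\otimes v$, but an element of $V_{\max}\cap I_\Gamma$ is a priori an arbitrary linear combination of ideal elements, and you have not explained why such a combination must reduce to a sum of local swaps. The missing observation is this: in degree $d+1$ of the tensor algebra, the span of monomials containing at least one non-cover step is exactly $(J_1)_{d+1}$, and by the very pigeonhole you used, every chain monomial of length $d+1$ must start at $x$, so $(J_1)_{d+1}$ is a linear complement to $V_{\max}$ in $T(V)_{d+1}$. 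Consequently $V_{\max}\cap I_\Gamma$ equals the projection of $(J_2)_{d+1}$ onto $V_{\max}$ along $(J_1)_{d+1}$; computing this projection term by term on $u\, r_a r_a(1)\, v$ (keeping only those summands $u\,r_ar_y\,v$ that are chain monomials) gives precisely a local swap or zero. With that paragraph inserted, your argument is complete for $d\ge 2$. One minor caveat: your appeal to Theorem~\ref{cyclic}(2) for $H_\Gamma(d-1,0)=0$ requires $d\ge 2$, so the cases $d\le 1$ should be disposed of separately (they are immediate, and your direct computation of $H^d(\hat\Phi)$ already handles them).
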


\begin{proof}
We prove both parts of the theorem by induction on $d$. The case $d=0$ is trivial, as there is nothing to prove.  The case $d=1$ is essentially trivial (and is anyways covered by \ref{magic}). We assume $d\ge 2$.    

Let $\Gamma'(d) = \{y_1,\ldots, y_s\}$ be the elements of rank $d$.  Set $\Omega' = \Gamma' \setminus\{y_1,\ldots, y_s\}$.   
Let $\Omega$ be the poset obtained by adjoining to $\Omega'$ a unique new maximal element $\bar 1$ (so that, in 
particular, $\Omega' = \Omega\setminus\{\bar 1\})$.  In order to apply induction to the pair ($\Omega$,$\Omega'$) we need to 
observe that $\Omega$ is a uniform cyclic ranked poset.  It is ranked because every maximal element of $\Omega'$ has 
rank $d-1$.  It is cyclic by construction.  The fact that $\Omega$ is uniform is Lemma 2.3 of \cite{CPS}.   
Let $W = ||\Delta(\Omega'\setminus\{*\})||$.

For each $y_i$, let $Z_i = ||\Delta((*,y_i])||$.  Then $Z_i \cap W = ||\Delta((*,y_i))||$.  Since $(\Gamma')_{y_i} = [*,y_i]$ is uniform and cyclic, we may also apply the inductive hypothesis to the pairs $(Z_i,Z_i\cap W)$. 

Exactly as in the proof of \ref{main1} we have a degree $+1$ cochain isomorphism $\zeta$ between the augmented cochain complex
$\F^s \to \bigoplus_i C^{n-1}(Z_i\cap W)$ and the cochain complex $C^n(Z,W)$.    Also exactly as in that proof we have a commutative diagram of cochain compexes:
\begin{diagram}
0&  \longrightarrow&C(Z,W) & \longrightarrow&C(Z)  & \longrightarrow&C(W) & \longrightarrow & 0 \\
&& \dTo_{\hat\Phi_{\Gamma'}} &&\dTo_{\Phi_{\Gamma'}} && \dTo_{\Phi_{\Omega'}}\\
0& \longrightarrow&K' & \longrightarrow&R_{\Gamma'}(\cdot,0)  & \longrightarrow&R_{\Omega'}(\cdot,0) & \longrightarrow & 0 \\
\end{diagram}
where $K'$ is the cochain complex $0\to R_{\Gamma'}(d-1,0) \to 0$ with the nonzero term in degree $d-1$.  Consider the final terms of the associated diagram of long exact cohomology sequences:
\begin{diagram}
& \cdots  \longrightarrow&H^{d-2}(W) & \longrightarrow&H^{d-1}(Z,W) & \longrightarrow&H^{d-1}(Z)& \longrightarrow\ \  0 \\
&& \dTo_{\Phi_{\Omega'}} &&\dTo_{\hat\Phi_{\Gamma'}} && \dTo_{\Phi_{\Gamma'}}\\
& \cdots\longrightarrow&\,H_{\Omega'}(d-2,0)\, & \longrightarrow&\, R_{\Gamma'}(d-1,0)\,  & \longrightarrow&\, H_{\Gamma'}(d-1,0)\, & \longrightarrow \ \ 0 \\
\end{diagram}
The first downward arrow in this diagram is an isomorphism by induction.  Using the isomorphism $\zeta$ and induction again we have 
$H^{d-1}(Z,W) = \bigoplus_i H^{d-2}(Z_i\cap W)  = \bigoplus_i R_{\Gamma_{y_i}'}(d-1,0) = R_{\Gamma'}(d-1,0)$.  
Hence the second downward map is also an isomorphism.  Hence the final downward map is also an isomorphism.

This completes the inductive proof of part (1) of the theorem.  Part (2) follows immediately from part (1), exactly as in the proof of 
part (3) of Lemma \ref{magic}.
\end{proof}

\begin{defn}  For each $k\ge 0$ we set $\Gamma^{>k} = \{y\in\Gamma\,|\, rk_\Gamma(y)>k\} \cup \{*\}$.  
\end{defn}

We note that $\Gamma^{>0} = \Gamma$ and that $\Gamma^{>k}$ is uniform if 
$\Gamma$ is uniform. The rank function on 
$\Gamma^{>k}\setminus\{*\}$ is $rk_{\Gamma^{>k}}(a) = rk_\Gamma(a)-k$.   It is also clear that for all $0\le j\le n$, 
$R_{\Gamma^{>k}}(n,j) = R_\Gamma(n+k,j+k)$ and furthermore that  $H_{\Gamma^{>k}}(n,j) = H_\Gamma(n+k,j+k)$.  
Using this notation we get the following corollary to Theorem \ref{RSW}.  This corollary is an exact restatement of Proposition 3.2.1 
from \cite{RSW4} in our notation. (Remark: there is a typographical error in 3.2.1 of \cite{RSW4}, which uses $H^{n-2}$
 instead of $\tilde H^{n-2}$. It is clear that reduced cohomology was intended by the authors.)

\begin{cor}\label{dimension}  Let $\Gamma$ be a finite uniform ranked poset and $*\ne v\in \Gamma$ an element of rank $d+1$.  Then for any $0\le k\le d-1$, 
$$\dim(r_v R_\Gamma(d-1,k)) = \dim(\tilde H^{d-k-1}(\Delta(\Gamma^{>k}_v\setminus\{*,v\})).$$ 
\end{cor}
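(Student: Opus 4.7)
The plan is to reduce the corollary to a direct application of Theorem \ref{RSW}(2) on an appropriate auxiliary cyclic uniform ranked poset. Set $\Lambda = \Gamma^{>k}_v$, the interval $[*, v]$ in $\Gamma^{>k}$. Then $\Lambda$ is cyclic with maximum $v$, and it is uniform because $\Gamma^{>k}$ is uniform (as noted just before the corollary) and any interval of a uniform ranked poset is again uniform. In $\Lambda$, the element $v$ has rank $(d+1)-k$, so taking $d' = d-k$, Theorem \ref{RSW}(2) applied to $\Lambda$ yields an isomorphism
$$\tilde H^{d-k-1}(\Delta(\Gamma^{>k}_v \setminus \{*, v\})) \;\cong\; R_{\Gamma^{>k}_v}(d-k, 0).$$

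It remains to identify the right-hand side with $r_v R_\Gamma(d-1, k)$. Since $v$ is the unique element of maximal rank in $\Lambda$, we have $R_{\Gamma^{>k}_v}(d-k, 0) = r_v R_{\Gamma^{>k}_v, d-k}$. Via the canonical identification $R_{\Gamma^{>k}}(n, j) = R_\Gamma(n+k, j+k)$ recorded just before the corollary, restricted to the summand of $R_{\Gamma^{>k}}(d-k, 0)$ beginning with $r_v$, this space corresponds to $r_v R_{\Gamma, d-k}$. Finally, a short monomial argument gives $r_v R_{\Gamma, d-k} = r_v R_\Gamma(d-1, k)$: the inclusion $r_v R_\Gamma(d-1, k) \subseteq r_v R_{\Gamma, d-k}$ is immediate from $R_\Gamma(d-1, k) \subseteq R_{\Gamma, d-k}$, while the reverse holds because any nonzero spanning monomial $r_v r_{b_1} \cdots r_{b_{d-k}}$ of $r_v R_{\Gamma, d-k}$ forces $v \to b_1$, and hence $rk_\Gamma(b_1) = d$, placing the monomial in $r_v R_\Gamma(d-1, k)$.

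Composing these identifications produces an isomorphism $\tilde H^{d-k-1}(\Delta(\Gamma^{>k}_v \setminus \{*,v\})) \cong r_v R_\Gamma(d-1, k)$, which in particular yields the asserted equality of dimensions. The principal obstacle is purely bookkeeping: one must correctly track the rank shifts and match up the relevant summands across the three algebras $R_\Gamma$, $R_{\Gamma^{>k}}$, and $R_{\Gamma^{>k}_v}$. Once these identifications are in place, the corollary is an immediate consequence of part (2) of Theorem \ref{RSW}.
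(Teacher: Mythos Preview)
Your proof is correct and follows essentially the same route as the paper's: apply part (2) of Theorem \ref{RSW} to the cyclic uniform poset $\Gamma^{>k}_v$ (where $v$ has rank $d-k+1$), and then identify $R_{\Gamma^{>k}_v}(d-k,0)$ with $r_v R_\Gamma(d-1,k)$ via the rank-shift identifications recorded just before the corollary. The paper's one-line proof writes the chain as $r_vR_\Gamma(d-1,k) = R_{\Gamma_v}(d,k) = R_{\Gamma^{>k}_v}(d-k,0)$ (restricting to $\Gamma_v$ first, then shifting), whereas you shift first and then restrict; these are the same identifications in a different order, and your monomial argument for $r_v R_{\Gamma,d-k} = r_v R_\Gamma(d-1,k)$ is exactly what underlies the paper's first equality.

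One small remark: your parenthetical justification that ``any interval of a uniform ranked poset is again uniform'' is stronger than what you need and is not obviously true for intervals $[a,b]$ with $a\ne *$. What you actually use is that $[*,v]$ in $\Gamma^{>k}$ is uniform, and this is immediate since for any $x\le v$ the set $S_x(1)$ and the relation $\sim_x$ are the same whether computed in $\Gamma^{>k}$ or in $[*,v]$. This does not affect the validity of your argument.
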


\begin{proof}
$r_vR_\Gamma(d-1,k) = R_{\Gamma_v}(d,k) = R_{\Gamma^{>k}_v}(d-k,0)$.  Apply (2) of \ref{RSW}.
\end{proof}

As in \cite{RSW4}, this formula easily yields a closed formula for the Hilbert series of $R_\Gamma$. 
We will return to that formula in Section 8.

\section{The Second Main Theorem}

\begin{defn} Let $\Gamma$ be a finite ranked poset.  For any $a<b$ let $X_\Gamma(a,b) = X(a,b)$ be the total space of the order complex $\Delta((a,b))$.
\end{defn}

We note that the dimension of $X(a,b)$ is $rk_\Gamma(b) - rk_\Gamma(a)-2$.  This is consistent with  the definition: $\dim(\Delta(\emptyset)) = -1$.  We also 
take as a (standard) convention $\tilde H^n(\Delta(\emptyset)) = 0$ for $n\ne -1$ and $\tilde H^{-1}(\Delta(\emptyset)) = \F$ 
(cf.~\cite{Woodcock}).  We are now prepared 
to restate and then prove Theorem \ref{intro2}.

\begin{thm}\label{main2}
Let $\Gamma$ be a finite ranked poset.   Then $\Gamma$ is uniform and the algebra $R_\Gamma$ is Koszul if and only if 
$$(*) \qquad \tilde H^n(X(a,b)) = 0 \hbox{ for all } a<b\in \Gamma \hbox{ and all\, } n\ne dim(X(a,b)).$$
\end{thm}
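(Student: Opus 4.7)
We outline the plan. The central geometric reduction is that $\Delta((a,b))$ depends only on the sub-poset $[a,b]$, viewed as its own cyclic ranked poset with $\hat 0 = a$ and $\hat 1 = b$; so if $[a,b]$ is uniform with $R_{[a,b]}$ Koszul, then Lemma \ref{magic}(3) applied to $[a,b]$ gives $\tilde H^n(X(a,b)) = 0$ for $n \neq \dim X(a,b)$. Combined with Theorem \ref{useful}, both directions of the biconditional reduce to careful analysis of cyclic posets.

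For the forward direction, Theorem \ref{useful} ensures that each $\Gamma_b$ is cyclic, uniform, and Koszul, and Lemma \ref{magic}(3) immediately handles intervals of the form $(*,b)$. For a general interval $(a,b)$ with $a \neq *$, the plan is to show by induction on rank that $[a,b]$ is itself uniform with $R_{[a,b]}$ Koszul, and then apply Lemma \ref{magic}(3) to $[a,b]$.

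For the backward direction, uniformity is obtained topologically. For $x$ with $rk(x) \geq 3$, the CM hypothesis forces $\tilde H^0(\Delta((*,x))) = 0$, so $\Delta((*,x))$ is connected. I cover $\Delta((*,x))$ by the closed stars $St(y) = \Delta((*,y])$ for $y \in S_x(1)$; each is a contractible cone, and the pattern of non-empty pairwise intersections (which is controlled by the inductive CM hypothesis on smaller intervals) encodes precisely the equivalence $\sim_x$. The Nerve Lemma then translates topological connectedness into $\sim_x$-connectedness of $S_x(1)$, giving uniformity at $x$; the cases $rk(x) \leq 2$ are trivial. Koszulity is proved by induction on rank: for cyclic $\Gamma_b$ of rank $d+1$ with Koszul $\Gamma_y$ ($y<b$) by induction, Theorem \ref{cyclic}(3) reduces Koszulity to showing $H_{\Gamma_b}(n,k) = 0$ for $0 \leq k < n \leq d-2$. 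Using the reindexing $H_{\Gamma_b}(n,k) = H_{\Gamma^{>k}_b}(n-k,0)$ together with Theorem \ref{main1} applied to $\Gamma^{>k}_b$ (itself uniform Koszul by a parallel induction, since Koszulity of $\Gamma_b$ via \ref{cyclic}(3) implies the corresponding vanishing for $\Gamma^{>k}_b$ under the same reindexing), the internal groups $H_{\Gamma_b}(n,k)$ are identified with topological cohomology of order complexes of sub-intervals, whose vanishing below top degree is then exactly the CM hypothesis.

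The main obstacle is the forward direction's sub-claim that $R_{[a,b]}$ is Koszul as a standalone algebra. The sub-poset $[a,b]$ does not embed in $R_\Gamma$ as a subalgebra --- the defining relations $r_x \sum_{x \to y} r_y = 0$ in $R_\Gamma$ involve covers $y$ of $x$ that may lie outside $[a,b]$ --- so Koszulity of $R_\Gamma$ does not transfer automatically to $R_{[a,b]}$. I expect to resolve this by a simultaneous induction on $rk(\Gamma)$ and on the interval depth $rk(b) - rk(a)$, combined with the topological identification of internal cohomology developed for the backward direction applied to $[a,b]$, so that the backward implication (CM implies uniform Koszul) for the proper sub-interval $[a,b]$ can be invoked at strictly smaller parameters.
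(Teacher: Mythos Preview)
Your backward direction is essentially on the right track: deducing uniformity from connectedness of $\Delta((*,x))$ is exactly what the paper does (its Lemma~\ref{lemma3} gives a direct poset argument in place of your Nerve Lemma), and reducing Koszulity of $\Gamma_b$ to vanishing of $H_{\Gamma_b}(n,k)$ via Theorem~\ref{cyclic}(3) is correct. However, your justification that $\Gamma_b^{>k}$ is Koszul---``since Koszulity of $\Gamma_b$ via \ref{cyclic}(3) implies the corresponding vanishing''---is circular: Koszulity of $\Gamma_b$ is precisely what you are proving. The paper fixes this by stepping down only one level: $\Omega = \Gamma_b^{>1}$ has rank $d$, and one argues that $\Omega$ inherits condition $(*)$ (the case $k=1$ of the forward direction, via the relative sequence of Lemma~\ref{lemma2}, is exactly the computation needed for the new intervals $(*,c)_\Omega$), so $\Omega$ is Koszul by the inductive hypothesis at rank $d$. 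Then $H_{\Gamma_b}(n,k) = H_\Omega(n-1,k-1) = 0$ for $k>0$ by Theorem~\ref{cyclic} applied to $\Omega$, while the $k=0$ case uses Theorem~\ref{main1} on $\Gamma' = \Gamma_b \setminus\{b\}$ (Koszul by induction) together with $(*)$ at $(*,b)$.

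Your forward direction has a genuine gap. The plan ``show $R_{[a,b]}$ is Koszul, then apply Lemma~\ref{magic}(3)'' cannot close by the route you describe: to invoke the backward implication on $[a,b]$ you need $(*)$ for $[a,b]$, and that includes the vanishing of $\tilde H^n(X(a,b))$ for $n < \dim X(a,b)$, which is exactly the conclusion you seek. More concretely, even with all proper subintervals of $[a,b]$ known to be CM by induction on depth, proving $[a,b]$ Koszul via Theorem~\ref{cyclic}(3) still requires $H_{[a,b]}(n,0)=0$, which by Theorem~\ref{main1} (applied to $[a,b)$) is $H^n(\Delta((a,b)))=0$---the very statement in question. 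The paper avoids intervals $[a,b]$ entirely and instead proves a direct transfer lemma (Lemma~\ref{lemma1}): if $\Gamma$ is uniform and Koszul then $\Gamma^{>k}$ is Koszul, established purely from the reindexing $H_{\Gamma^{>k}}(n,j) = H_\Gamma(n+k,j+k)$ and Theorem~\ref{cyclic}, with no appeal to CM. With this in hand, for $rk(a)=k>1$ one notes $(a,b)_\Gamma = (a,b)_{\Gamma^{>1}}$ and applies the inductive hypothesis to $\Gamma^{>1}$ (rank $d$, Koszul by Lemma~\ref{lemma1}); the case $k=1$ is handled separately by the relative cohomology sequence and Lemma~\ref{lemma2}. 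The passage to $\Gamma^{>k}$ rather than to $[a,b]$ is the missing idea.
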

 
\begin{proof}

The proof of \ref{main2} will proceed by induction on the maximal rank of any element of $\Gamma$.  We first prove three technical lemmas. 

\begin{lemma}\label{lemma1}  If the uniform ranked poset $\Gamma$ is Koszul, then the poset $\Gamma^{>k}$ is Koszul for all 
$k\ge 0$. 
\end{lemma}

\begin {proof}
Fix $k\ge 0$ and suppose $\Gamma^{>k}$ is not Koszul.  Then for some $x\in \Gamma^{>k}$,  $(\Gamma^{>k})_x$ is not 
Koszul.  Choose such an $x$ of minimal rank.   

 Note that $(\Gamma^{>k})_x  = (\Gamma_x)^{>k}:= \Gamma_x^{>k}$.   By minimality of $x$, $\Gamma_y^{>k}$ is Koszul
 for every $y<x$ in $\Gamma^{>k}$.  So by \ref{cyclic}, $H_{\Gamma_x^{>k}}(n,j) \ne 0$ for some $0\le j<n $.   But 
  $H_{\Gamma_x^{>k}}(n,j) = H_{\Gamma_x}(n+k,j+k)$.   This contradicts Lemma 2.7, since $\Gamma_x$ is Koszul. 
\end{proof}

\begin{lemma}\label{lemma2} Let $\Gamma=\Gamma_b$ be a cyclic uniform ranked poset.  Set $\Gamma'=\Gamma\setminus\{b\}$, 
$\Omega = \Gamma^{>1}$ and $\Omega' = \Omega\setminus\{b\}$.  Let $Y$ and $Z$ be the total spaces of the order complexes of 
$\Gamma'$ and $\Omega'$ respectively.  Then for all $n>0$,
$$H^n(Y,Z) = \bigoplus_{a\in \Gamma(1)} \tilde H^{n-1}(X(a,b)).$$
\end{lemma}

\begin{proof}
An $n$-cell $\beta = (b_0,\cdots,b_n)$ is a basis element of $C^n(Y,Z)$ if and only if it is not an $n$-cell of $Z$, which happens 
precisely when $b_0$ has rank 1 in $\Gamma$.  For each $a\in \Gamma(1)$ let $C_a^n$ be the $F$-span of those $\beta$ for which
$b_0=a$.  Since each such $a$ is minimal, $d_Y:C_a^n \to C_a^{n+1}$.  Thus we have a cochain complex decomposition: 
$C^*(Y,Z) = \bigoplus_{a\in \Gamma(1)} C_a^*$.  

For $n>0$, let $\zeta: C_a^n\to C^{n-1}(X(a,b))$ be the isomorphism defined by 
$(a,b_1,\cdots,b_n) \mapsto (b_1,\cdots,b_n)$.  Similarly define $\zeta: C_a^0\to \F$ by $(a) \mapsto 1$.  It is clear that we have defined a degree -1 cochain complex isomorphism between $C_a^*$ and the augmented cocomplex $\F \to C^*(X(a,b))$.

The statement of the lemma is now clear. 
\end{proof}

Our last lemma relates the property of being uniform to the property that the topological spaces $X(a,b)$ are connected.

\begin{lemma}\label{lemma3} Let $\Gamma$ be a finite ranked poset which satisfies:
$$(**)\qquad X(a,b) \hbox{ is connected for all } a<b \hbox{ with } rk_\Gamma(b)-rk_\Gamma(a) \ge 3.$$
Then for all $a<b$, the interval $[a,b]$ is uniform (as a ranked poset with unique minimal element $a$).  In particular $\Gamma$ is uniform.

\end{lemma}

\begin{proof}
Choose $a<b$ in $\Gamma$.  We proceed by induction on 
$rk_\Gamma(b)-rk_\Gamma(a)$.  We may also assume 
$rk_\Gamma(b)-rk_\Gamma(a) \ge 3$, since otherwise $[a,b]$ is 
automatically uniform.  

For any $a<c<b$, the interval $[a,c]$ is uniform by induction.  Therefore, it 
remains only to check the definition of uniform against the element $b$ itself.
Returning to the definition of uniform (relative to $[a,b]$) we define 
$S_{[a,b]}(1) = \{ c \,|\, a<c<b \hbox{ and } rk_\Gamma(b)-rk_\Gamma(c)=1\}$.  
The equivalence relation on $S_{[a,b]}(1)$ is defined by transitive extension 
from the definition: $c_1\sim_{[a,b]} c_2$ if there exists $a\le u$ with $u<c_1$, $u<c_2$ and 
$rk_\Gamma(b)-rk_\Gamma(u) = 2$. Let $[c_1], [c_2],\ldots [c_r]$ be the distinct equivalence classes of $\sim_{[a,b]}$. 

It remains only to prove $r=1$, so let us assume $r>1$.  For each $1\le i\le r$, let 
$U_i = (a,[c_i]]:= \{x\in \Gamma\, |\, a<x\le f \hbox{ for some } f\in [c_i]\}$.  Then 
$(a,b) = \cup_i U_i$.  

Since $X(a,b)$ is connected, the poset $(a,b)$ must be connected as a graph.  Since each $U_i$ is a union of maximal intervals in (a,b), the various $U_i$ can not all be disjoint. So we may assume 
$U_1\cap U_2 \not= \emptyset$.  
Choose $y\in U_1\cap U_2$.  Then $y<c_1'$ and $y<c_2'$ for some $c_1'\in [c_1]$ 
and $c_2'\in [c_2]$.  By induction, $[y,b]$ is uniform.  This implies 
$c_1' \sim_{[y,b]} c_2'$, which is clearly a contradiction to $c_1'\not\sim_{[a,b]}c_2'$, since $[y,b] \subset [a,b]$.   Hence $r=1$ and $[a,b]$ is uniform.
\end{proof}

We can now complete the proof of \ref{main2}. 

We first prove the claim that if $\Gamma$ is uniform and Koszul then condition (*) holds.  

Assume $\Gamma$ is uniform and Koszul.  Fix $a<b$ in $\Gamma$. Let $k$ be the rank of 
$a$ and $d+1$ be the rank of $b$.  Then $dim(X(a,b)) =d-k-1$ and so there is nothing to prove unless $d>k+1$.  Since $\Gamma_b$ is uniform and Koszul and $[a,b)$ is contained in $\Gamma_b$, we may assume $\Gamma = \Gamma_b$.  

Let $\Gamma'$, $\Omega$, $\Omega'$, $Y$ and $Z$ be as in Lemma
\ref{lemma2}.  By \ref{lemma1},  $\Omega$ and $\Omega'$ are Koszul.

If $k=0$, that is $a=*$, then $X(a,b) = Y$ and by (3) of \ref{magic}  we have
$\tilde H^n(X(a,b)) = 0$ for all $n<  dim(X(a,b))$.  

Assume $k=1$.  Consider the short exact sequence of cochain complexes
$$0\to C(Y,Z) \to C(Y) \to C(Z) \to 0$$
and associated long exact sequence
$$(***)\qquad \cdots\to \tilde H^{n-1}(Z) \to H^n(Y,Z)\to \tilde H^n(Y) \to \cdots$$
By \ref{magic}, $\tilde H^n(Y) = 0$ for $n<dim(Y) = d-1$ and $\tilde H^{n-1}(Z) = 0$ 
for $n-1<dim(Z)=d-2$.  Hence $H^n(Y,Z)=0$ for $n<d-1$.  By \ref{lemma2}, $\tilde H^{n-1}(X(a,b))$ is a summand of 
$H^n(Y,Z)$ and we obtain $\tilde H^{n-1}(X(a,b))=0$ for $n-1<dim(X(a,b)) = d-2$, as required.

Finally, consider the case $k>1$.  In this case $a\in \Omega$ with 
$rk_\Omega(a)>0$.  By induction on $d$, we immediately get 
$\tilde H^n(X(a,b)) = 0$ for $n<dim(X(a,b))$.  This completes the proof of the first half.

We now turn to the converse. Assume condition (*) holds.  By Lemma \ref{lemma3}, $\Gamma$ is uniform.  We proceed to prove that $\Gamma$ is Koszul, again by induction on 
$d+1$, the maximal rank of any element of $\Gamma$.  If $d=0$ there is nothing to prove.  Assume $d>0$ and choose any element 
$b$ of rank $d+1$.  It suffices to assume $\Gamma=\Gamma_b$.  

Again, let $\Gamma'$, $\Omega$, $\Omega'$ and $Y$ be as 
in Lemma \ref{lemma2}.
By induction, the posets $\Gamma'$, $\Omega$ and $\Omega'$ are all Koszul, since the hypothesis (*) is true for all three and all three only have elements of rank at most $d$. 

For any $*\ne a<b$ in $\Gamma$, $\Gamma_a$ also satisfies (*) and thus, by induction $\Gamma_a$ is Koszul.  Hence by 
\ref{cyclic} it suffices to prove
 $$H_\Gamma(n,k) = 0 \hbox{ for all } 0\le k < n\le d-2.$$

Since $\Omega$ is cyclic, Koszul, and rank $d$, Theorem \ref{cyclic} tells us that for all  $0<k<n\le d-2$:
$$H_\Gamma(n,k) = H_{\Gamma'}(n,k) = H_{\Omega'}(n-1,k-1)=H_\Omega(n-1,k-1)=0.$$  

By Theorem \ref{main1} and (*), for all $0<n\le d-2$, 
$$H_\Gamma(n,0) = H_{\Gamma'}(n,0) = H^n(Y) = 0.$$  

This completes the proof of \ref{main2}.
\end{proof}

\section{A Few Examples}

Given that the Cohen-Macaualy property for posets has been studied extensively for many years, giving specific examples of 
Theorem \ref{intro2} does not seem to be all that productive.  Nevertheless, we discuss three types of examples that were of specific 
interest to us, as they represent different areas where we struggled with the Koszul question in the past. 

\begin{example}   Let $\Gamma$ be the infinite ranked poset of all partitions of non-negative integers.  We identify a partition with 
its Young diagram, and then the order on $\Gamma$ is $\subset$.  The unique minimal element is the partition (0).  For any integer 
$n$ and any partition $\lambda \vdash n$, the finite poset $\Gamma_\lambda$ is a modular lattice, hence Cohen-Macaulay 
(cf.~\cite{BGSsurvey}).  Thus $R_{\Gamma_\lambda}$ is Koszul.  This result was proved earlier by T. Cassidy and Shelton using 
a modification of  the techniques of \cite{CPS}.
\end{example}

\begin{example}  Let ${\mathcal A} = \{H_1,\ldots, H_n\}$ be an arrangement of hyperplanes in an ambient vector space $V$ 
(over an arbitrary field, not necessarily $\F$).   Then the intersection lattice $\Gamma$ associated to ${\mathcal A}$ is semi-modular and therefore Cohen-Macaulay over any field (cf.~\cite{BGSsurvey}).  Hence $R_\Gamma$ is Koszul. This class 
of examples includes, in particular, the lattice of all subspaces of a finite dimensional vector space over a finite field. 
\end{example}

\begin{example}\label{CW}
Let $\Gamma$ be the incidence poset of any finite regular CW complex, let $X$ be the total space of the CW complex and let 
$\bar\Gamma = \Gamma\cup \{\emptyset\}$, where $\emptyset$ is uniquely minimal.   If $\Gamma$ is pure (all maximal cells 
have the same dimension), then we set $\hat \Gamma = \Gamma \cup \{ \emptyset, X\}$ where $X$ is uniquely maximal.  

It is well known that $\bar \Gamma$ is Cohen-Macaulay over any field.  Hence $R_{\bar \Gamma}$ is Koszul over any field.  
This theorem was first proved in both \cite{RSW3} and \cite{CPS}.

The poset $\hat \Gamma$ may well not be Cohen-Macaulay. The second main result of \cite{CPS}, Theorem 5.3, is an exact description, in combinatorial-topological terms, of when the algebras $R_{\hat\Gamma}$ 
are Koszul (these conditions include uniform, expressed as a topological condition).  In the paper \cite{SadSh} it was further shown that the conditions under which $\hat\Gamma$ is Koszul  are topological invariants rather than just combinatorial invariants.  However, the conditions of that theorem can, with some small effort, be translated directly into the statement that $\hat\Gamma$ is Cohen-Macaulay, thereby relating that theorem directly to \ref{main2}. Since Cohen-Macaulay is known to be a topological invariant, Theorem \ref{main2} also recaptures the results of \cite{SadSh}.  

There is an explicit example in \cite{CPS} (Example 5.9 and Theorem 5.10) of a pure 3-dimensional regular CW complex $X$ that 
is contractible, and yet $\hat \Gamma$ is not Koszul.  The proof of this fact was somewhat detailed.  But one can see by inspection, in 
the notation of that example, that the open interval $(C_4, X)$ is not connected as a graph.  That is enough show that $\hat\Gamma$ 
is not Cohen-Macaulay and conclude that the algebra is not Koszul.  
\end{example}

\section{On Numerical Koszulity}

The Hilbert series of an $\N$-graded $\F$-algebra $R = \oplus_iR_i$ is the power series $H(R,t) = \sum_i \dim(R_i)t^i$.  It is well known that if $R$ is a quadratic and Koszul algebra with quadratic dual algebra $R^!$ (cf.~\cite{PP}), then $H(R,-t)H(R^!,t) = 1$.   We say that a quadratic algebra is {\it numerically Koszul} if it satisfies this equation.  

Given a finite uniform ranked poset $\Gamma$, the quadratic dual of the algebra $R_\Gamma$ will be denoted here as 
$A'_\Gamma$.  This algebra was first described in \cite{GRSW} as a deformation of another important quadratic algebra
 $A_\Gamma$, the  {\it splitting algebra} of the poset $\Gamma$.  The algebras $A'_\Gamma$ and $A_\Gamma$ have the 
 same Hilbert series, which was computed in \cite{RSW1} and then recalculated in terms of order complexes in \cite{RSW4}.   We 
 record here Theorem 4.1.1 of \cite{RSW4}, translated into our notation.  (Remark: as with Theorem 3.2.1 of \cite{RSW4}, 
 Theorem 4.1.1 has a typographical error.  The theorem must use reduced cohomology, not cohomology.)

\begin{defn}  The reduced Euler characteristic of a space $X$, relative to $\F$, is 
$$\tilde \chi (X) = \sum_i (-1)^i \dim(\tilde H^i(X))$$
where cohomology is calculated with coefficients in $\F$. 
\end{defn}

Note that $\tilde\chi (\Delta(\emptyset)) = -1$.

Before we state the theorem, we  borrow some nice notation from \cite{RSW4}.

\begin{defn} For any $a\in \Gamma$ and $1\le i\le rk_\Gamma(a)$ we set 
$$\Gamma_{a,i} = \{ w<a \,|\, rk_\Gamma(a)-rk_\Gamma(w) \le i-1\} = \Gamma^{>rk_\Gamma(a)-i} \cap (*,a)$$
\end{defn}

Note that $\Gamma_{a,i}$ is a subposet of $[*,a)$ and the dimension of $\Delta(\Gamma_{a,i})$ is 
$i-2$.  It is helfpul to note that $\Gamma_{a,1} = \emptyset$, $\Gamma_{a,2} = S_1(a)$ and $\Gamma_{a,rk_\Gamma(a)} = (*,a)$.   

\begin{thm}\label{HS1} {\rm (\cite{RSW4}, 4.1.1)} Let $\Gamma$ be a uniform finite ranked poset.  Then:
$$  H(A_\Gamma, t)^{-1} = 1 + \sum_{i\ge 1}\, \sum_{a\in \Gamma \atop rk_\Gamma(a) \ge i} \tilde\chi(\Delta(\Gamma_{a,i})) t^i $$
\end{thm}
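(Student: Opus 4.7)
The plan is to derive the formula via Corollary~\ref{dimension} together with an inductive construction of a free resolution over $R_\Gamma$.

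Since $A'_\Gamma$ is a flat deformation of $A_\Gamma$ by \cite{GRSW}, one has $H(A_\Gamma,t)=H(R_\Gamma^!,t)$, so it suffices to compute $H(R_\Gamma^!,t)^{-1}$. Using the direct-sum decomposition $R_{\Gamma,+}=\bigoplus_{a\ne *} r_a R_\Gamma$ and applying Corollary~\ref{dimension} to each summand in internal degree $i$ yields the closed form
\[
H(R_\Gamma,t) \;=\; 1 \;+\; \sum_{i\ge 1} t^i \sum_{\substack{a\in \Gamma \\ rk_\Gamma(a)\ge i}} \dim \tilde H^{\,i-2}\bigl(\Delta(\Gamma_{a,i})\bigr).
\]

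In the Koszul case one has $H(R_\Gamma^!,t)^{-1}=H(R_\Gamma,-t)$, and by Theorem~\ref{main2} every interval of $\Gamma$ is then Cohen-Macaulay, so each $\Delta(\Gamma_{a,i})$ has reduced cohomology concentrated in the top degree $i-2$; thus $(-1)^i\dim\tilde H^{i-2}$ coincides with $\tilde\chi$, and the formula follows at once. For the general uniform case, the proposal is to build, inductively on $|\Gamma|$, a resolution $P_\bullet\to \F$ of free $R_\Gamma$-modules whose term in homological degree $j$ is $\bigoplus_{i,\,a} R_\Gamma\otimes C^{j-2}(\Delta(\Gamma_{a,i}))$, with differential combining the simplicial boundary on each order complex with right multiplication by the appropriate generators $r_b$. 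Taking Euler characteristics in each fixed internal degree $i$ converts the alternating sum of simplicial chain dimensions into $\tilde \chi(\Delta(\Gamma_{a,i}))$, yielding the stated inversion formula after applying the Euler-Poincare principle.

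The main obstacle is producing and correctly analyzing the resolution $P_\bullet$. The inductive step of adjoining a top-rank element $b$ to a smaller poset must yield compatible chain maps between the order complexes $\Delta(\Gamma_{a,i})$ for varying $a$, and in the non-Koszul setting the candidate complex will fail to be exact --- only its graded Euler characteristics will compute the correct Hilbert series. The short exact sequences of Section~4, Lemma~\ref{magic}, and the diagram-chasing argument in the proof of Theorem~\ref{RSW} supply the necessary machinery; the delicate bookkeeping point is handling the Mayer-Vietoris-style decompositions of the truncated intervals $\Gamma_{a,i}$ as more top elements are adjoined, and verifying that the internal boundary maps assemble coherently into a valid $R_\Gamma$-linear differential.
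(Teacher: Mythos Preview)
The paper does not prove this theorem; it simply records it as \cite{RSW4}, Theorem~4.1.1, translated into the present notation. So there is no ``paper's proof'' to compare against beyond the remark after Corollary~\ref{dimension} that the Hilbert series of $R_\Gamma$ (not $A_\Gamma$) follows from that corollary.

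Your Koszul-case argument is fine and is exactly the mechanism behind Theorem~\ref{HS2}: Corollary~\ref{dimension} gives $\dim r_aR_{\Gamma,i-1}=\dim\tilde H^{i-2}(\Delta(\Gamma_{a,i}))$, and summing over $a$ recovers $H(R_\Gamma,t)$; then Koszul plus Cohen--Macaulay collapses $\dim\tilde H^{i-2}$ to $(-1)^{i-2}\tilde\chi$ and $H(R_\Gamma,-t)=H(A_\Gamma,t)^{-1}$.

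The general (non-Koszul) case, however, has a genuine gap. A complex of free $R_\Gamma$-modules, exact or not, carries information about $H(R_\Gamma,t)$: if $P_\bullet\to\F$ is a free $R_\Gamma$-resolution then the Euler--Poincar\'e identity gives $H(R_\Gamma,t)^{-1}$ as the alternating sum of the graded ranks, and if $P_\bullet$ is merely a complex then the alternating sum equals that of its homology --- still an $R_\Gamma$-side quantity. Neither computes $H(A_\Gamma,t)^{-1}=H(R_\Gamma^!,t)^{-1}$. The entire point of Section~8 is that $H(R_\Gamma,-t)$ and $H(A_\Gamma,t)^{-1}$ can differ by a nonzero $\NKD(\Gamma)$, so no construction living purely over $R_\Gamma$ can bridge the two without extra input about $A_\Gamma$ itself. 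Concretely, your proposed terms $R_\Gamma\otimes C^{j-2}(\Delta(\Gamma_{a,i}))$ have alternating rank polynomial $\sum_{a,i}\tilde\chi(\Delta(\Gamma_{a,i}))t^i$, and if this equalled $H(R_\Gamma,t)^{-1}$ (which is what any $R_\Gamma$-side Euler argument would show) you would have proved $\NKD(\Gamma)=0$ for all uniform $\Gamma$, contradicting Theorems~\ref{main3} and~\ref{wild}.

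The proof in \cite{RSW4} works directly with the algebra $A_\Gamma$ (equivalently $A'_\Gamma$), using the recursive structure of its Hilbert series established in \cite{RSW1}; that is the input your approach is missing.
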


For further use we also record the Hilbert series of $R_\Gamma$ from 3.1.1 of \cite{RSW4} (again, correcting for reduced cohomology).

\begin{thm}\label{HS2} {\rm (\cite{RSW4}, 3.1.1)} Let $\Gamma$ be a uniform finite ranked poset.  Then:
$$  H(R_\Gamma, -t) = 1 + \sum_{i\ge 1}\, \sum_{a\in \Gamma \atop rk_\Gamma(a) \ge i} (-1)^{i-2}\dim\tilde H^{i-2}(\Delta(\Gamma_{a,i})) t^i $$
\end{thm}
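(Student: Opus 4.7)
The plan is to extract the Hilbert series directly from the vector-space decomposition
$R_{\Gamma,+} = \bigoplus_{v\ne *} r_v R_\Gamma$
recorded in Section 2. In each positive degree this restricts to $R_{\Gamma,n} = \bigoplus_{v\ne *} r_v R_{\Gamma,n-1}$, so it suffices to identify each summand $r_v R_{\Gamma,n-1}$ with a single reduced cohomology group of an order complex; that identification will come from Corollary \ref{dimension}.

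Fix $v \in \Gamma$ with $m := rk_\Gamma(v)$. Any nonzero monomial in $r_v R_{\Gamma,n-1}$ must begin $r_v r_{y_1}$ with $v \to y_1$, so $y_1$ has rank $m-1$; this forces the summand to vanish when $n > m$ and identifies it with $r_v R_\Gamma(m-2,\, m-n)$ whenever $2 \le n \le m$. Applying Corollary \ref{dimension} with $d+1 = m$ and $k = m-n$ (so the admissible range $0 \le k \le d-1$ becomes exactly $2 \le n \le m$), and noting that $\Gamma_v^{>k}\setminus\{*,v\} = \Gamma_{v,\,m-k} = \Gamma_{v,n}$, yields
$$\dim(r_v R_{\Gamma,n-1}) = \dim\tilde H^{n-2}(\Delta(\Gamma_{v,n})) \qquad (2 \le n \le m).$$
The boundary case $n = 1$ is checked by hand: $r_v R_{\Gamma,0} = \F r_v$ has dimension $1$, while $\Gamma_{v,1} = \emptyset$ gives $\tilde H^{-1}(\Delta(\emptyset)) = \F$ by the convention stated just before Theorem \ref{HS1}; the two agree.

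Summing over all $v$ with $rk_\Gamma(v) \ge n$ (terms with $rk_\Gamma(v) < n$ vanish automatically) and then over $n \ge 1$ produces
$$H(R_\Gamma,t) = 1 + \sum_{i \ge 1} t^i \sum_{a \in \Gamma \atop rk_\Gamma(a) \ge i} \dim\tilde H^{i-2}(\Delta(\Gamma_{a,i})),$$
and the substitution $t \mapsto -t$, combined with $(-1)^i = (-1)^{i-2}$, yields the stated identity for $H(R_\Gamma,-t)$. The only real work is the index translation between the corollary's $(d,k)$ and the theorem's $(rk_\Gamma(a), i)$, together with verifying that the $i = 1$ boundary is absorbed correctly by the reduced-cohomology convention; neither is a substantial obstacle once Corollary \ref{dimension} is available.
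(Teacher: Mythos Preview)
Your argument is correct and is precisely the derivation the paper has in mind: the paper does not give its own proof of this theorem (it is quoted from \cite{RSW4}), but immediately after Corollary~\ref{dimension} it remarks that ``this formula easily yields a closed formula for the Hilbert series of $R_\Gamma$,'' and your computation is exactly that easy derivation. The index translation $d+1=m$, $k=m-n$, together with the identification $\Gamma_v^{>k}\setminus\{*,v\}=\Gamma_{v,n}$ (which the paper records in the definition of $\Gamma_{a,i}$), and the handling of the $i=1$ case via the convention $\tilde H^{-1}(\Delta(\emptyset))=\F$, are all carried out correctly.
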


\begin{defn} Let $\Gamma$ be a finite uniform ranked poset.

(1) Let $v\in \Gamma$ and $i\le rk_\Gamma(v)$.  We say that the pair $(v,i)$ is {\it good} if 
$$ \tilde\chi(\Delta(\Gamma_{v,i})) = (-1)^{i-2}\dim\tilde H^{i-2}(\Delta(\Gamma_{v,i}))$$
and {\it bad} if the equality does not hold. 

(2) We define the {\it numerical Koszul defect} of $\Gamma$ to be 
$$\begin{array}{ll}
 \NKD(\Gamma) &= H(R_\Gamma, -t) - H(A_\Gamma,t)^{-1}\\[8pt]
 & = \sum\limits_{(v,i)\  bad} \left[ (-1)^{i-2}\dim\tilde H^{i-2}(\Delta(\Gamma_{v,i}))
- \tilde\chi(\Delta(\Gamma_{v,i}))\right] t^i\\
\end{array} $$
\end{defn}

Since $A_\Gamma$ and $A_\Gamma' = R_\Gamma^!$ have the same Hilbert series for any uniform $\Gamma$, we see that the ring 
$R_\Gamma$ is numerically Koszul if and only if $NKD(\Gamma) = 0$. 

We record the following trivial observation which we will use frequently:  if $\Gamma$ is uniform, then for  any $v\in \Gamma$, $(v,1)$,
$(v,2)$, $(v,3)$ are always good ($(v,3)$ is good because uniformity implies the graph $\Gamma_{v,3}$ is connected).  And $(v,4)$ is good if and only if $H^1(\Delta(\Gamma_{(v,4})) = 0$.  This immediately gives the following simple theorem.

\begin{thm}\label{rank4}  If $\Gamma$ is a finite uniform ranked poset and no element of $\Gamma$ has rank bigger than 4, then $R_\Gamma$ is Koszul if and only if $R_\Gamma$ is numerically Koszul.
\end{thm}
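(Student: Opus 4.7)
The one direction---Koszul implies numerically Koszul---is the standard Hilbert-series identity recalled at the start of the section. For the converse, assume $\Gamma$ is uniform with no element of rank exceeding $4$ and that $R_\Gamma$ is numerically Koszul. My first move is to compute $\NKD(\Gamma)$ explicitly. By the paragraph preceding the theorem, uniformity alone forces $(v,i)$ to be good for $i \in \{1,2,3\}$; since $i \le rk_\Gamma(v) \le 4$, the only potentially bad pairs have $i=4$ and $rk_\Gamma(v)=4$. For such $v$, $\Gamma_{v,4} = (*,v)$ is nonempty and $\Delta((*,v))$ is connected (uniformity of $\Gamma_v$ makes the rank-$3$ elements $\sim_v$-connected, and every rank-$1$ or rank-$2$ element lies below some rank-$3$ element by rankedness), so $\tilde H^{-1}$ and $\tilde H^0$ of $\Delta((*,v))$ both vanish, and the contribution of $(v,4)$ to $\NKD(\Gamma)$ reduces to $\dim \tilde H^1(\Delta((*,v)))\,t^4$. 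Hence numerical Koszulity is equivalent to $\tilde H^1(\Delta((*,v))) = 0$ for every rank-$4$ element $v$.

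The next step is to prove each $\Gamma_v$ is Koszul, by induction on $rk_\Gamma(v)$; combined with Theorem~\ref{useful} this gives Koszulity of $R_\Gamma$. If $rk_\Gamma(v) \le 3$, then with $d = rk_\Gamma(v) - 1 \le 2$ the condition $0 \le k < n \le d-2$ in Theorem~\ref{cyclic}(3) is vacuous, so the inductive hypothesis (every $\Gamma_z$ with $z<v$ is Koszul) already yields $\Gamma_v$ Koszul. Suppose now $rk_\Gamma(v) = 4$ and set $\Gamma' = \Gamma_v \setminus \{v\}$. The equivalence relations $\sim_x$ at every $x \in \Gamma'$ are unaltered by the removal of $v$, so $\Gamma'$ is uniform; and $(\Gamma')_w = \Gamma_w$ with $rk_\Gamma(w) \le 3$ is Koszul by the previous case, so Theorem~\ref{useful} gives $R_{\Gamma'}$ Koszul. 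Applying Theorem~\ref{main1} to $\Gamma'$, and using $\Gamma' \setminus \{*\} = (*,v)$, yields $H^1(X(*,v)) = H_{\Gamma'}(1,0)$. Direct inspection shows $R_{\Gamma_v}(n,0) = R_{\Gamma'}(n,0)$ for $n \in \{0,1,2\}$ (the generator $r_v$ only enters via the differential $R_{\Gamma_v}(2,0) \to R_{\Gamma_v}(3,0)$, since for any $y$ of rank at most $3$ one has $r_y r_v = 0$), so $H_{\Gamma_v}(1,0) = H_{\Gamma'}(1,0)$. Combining with the preceding paragraph, $H_{\Gamma_v}(1,0) = \tilde H^1(X(*,v)) = 0$. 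Theorem~\ref{cyclic}(3) applied to $\Gamma_v$, where $d=3$ and the only surviving constraint is $(n,k)=(1,0)$, now gives $\Gamma_v$ Koszul, completing the induction.

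I do not expect a serious obstacle. The whole argument is really the observation that, under the rank-$4$ ceiling, the only internal cohomology group which can obstruct Koszulity is $H_{\Gamma_v}(1,0)$ for $v$ of rank $4$, and the truncation-to-$\Gamma'$ trick identifies this with the single topological invariant $\tilde H^1(X(*,v))$ that $\NKD(\Gamma)=0$ controls. The points requiring care are the uniformity of $\Gamma'$ and the agreement of the $\Gamma_v$- and $\Gamma'$-cochain complexes through degree $2$, both immediate from the definitions.
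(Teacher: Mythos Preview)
Your proof is correct, but the route differs from the paper's. The paper argues directly that $\Gamma$ is Cohen--Macaulay: for $a<b$ with $\dim X(a,b)\le 1$ the required vanishing is immediate, while for $\dim X(a,b)=2$ one must have $a=*$ and $rk_\Gamma(b)=4$, and then goodness of $(b,4)$ (forced by $\NKD(\Gamma)=0$) gives $\tilde H^n(X(*,b))=0$ for $n\ne 2$. Theorem~\ref{main2} then finishes. Your argument instead bypasses Theorem~\ref{main2} entirely: you go straight to the internal criterion of Theorem~\ref{cyclic}(3), identifying the single relevant obstruction $H_{\Gamma_v}(1,0)$ with $\tilde H^1(X(*,v))$ via Theorem~\ref{main1} applied to $\Gamma'=\Gamma_v\setminus\{v\}$, and using the identification $R_{\Gamma_v}(n,0)=R_{\Gamma'}(n,0)$ for $n\le 2$ exactly as in the proof of Lemma~\ref{magic}. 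What this buys you is that you never need to check the Cohen--Macaulay condition on intervals $(a,b)$ with $a\ne *$; in particular, you avoid the connectedness claim for $X(a,b)$ with $rk_\Gamma(a)=1$, $rk_\Gamma(b)=4$, which the paper passes over rather quickly. The paper's route, on the other hand, is shorter once Theorem~\ref{main2} is available and makes the Cohen--Macaulay connection explicit.
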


\begin{proof}
Only one direction needs to be proved. Assume $R_\Gamma$ is numerically Koszul.  Fix any $a<b$ in $\Gamma$.  If $dim(X(a,b))\le 1$ 
then it is clear from uniform that $\tilde H^n(X(a,b))$ is non-zero only for $n=dim(X(a,b))$.  So assume $dim(X(a,b))=2$ 
(the maximum possible such dimension).  This can only happen if $rk_\Gamma(b) = 4$ and $a = *$.  But by hypothesis, $(b,4)$ is 
good. Since $\Gamma_{b,4} = (*,b)$, we have $\tilde H^n(X(*,b)) =0$ for $n\ne 2$.  We have shown $\Gamma$ is Cohen-Macaualay 
and thus $\Gamma$ is Koszul.
\end{proof}

We now describe a construction for combining two uniform finite ranked posets, over which the $\NKD$ will be additive.  
This will allow us to construct examples that are numerically Koszul  but not Koszul.   

For the time being, let $\Gamma$ and $\Omega$ be two uniform finite ranked posets with minimal elements 
$*_\Gamma$ and $*_\Omega$ respectively.   Let $X_\Gamma$ and $X_\Omega$ be the total spaces of the respective order 
complexes $\Delta(\Gamma)$ and $\Delta(\Omega)$.   Fix elements $v\in \Gamma(1)$ and $v'\in \Omega(1)$.

\begin{defn}  With notation as above we set
$$ \Gamma \vee_{(v,v')}\Omega = \Gamma \cup \Omega / ( *_\Gamma \sim *_\Omega, v\sim v')$$
We identify $\Gamma$ and $\Omega$ as subsets of $ \Gamma \vee_{(v,v')}\Omega$.  The set inherits an order from $\Gamma$ and $\Omega$ wherein two elements are related if and only if they are related in either $\Gamma$ or in $\Omega$. We denote the unique minimal element by  $*$ and the common image of $v$ and $v'$ by $\bar v$.  
\end{defn}

It is clear that the poset $ \Gamma \vee_{(v,v')}\Omega$ inherits the uniform property from 
$\Gamma$ and $\Omega$.  The following lemma is obvious. 

\begin{lemma}\label{top}  Let notation be as above.  Then the total space of the order complex of the poset 
$\Gamma\vee_{(v,v')}\Omega$ is $X_\Gamma \vee X_\Omega$.  
\end{lemma}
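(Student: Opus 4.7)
The plan is to unpack the order complex of $\Gamma \vee_{(v,v')} \Omega$ combinatorially and identify it topologically, up to the usual collapse of a contractible subspace that turns a gluing into a wedge.

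First I would classify the chains of $\Gamma \vee_{(v,v')} \Omega$. The crucial observation is that if $g \in \Gamma \setminus \{*,\bar v\}$ and $h \in \Omega \setminus \{*,\bar v\}$, then $g$ and $h$ are incomparable in the glued poset, because the inherited order only relates two elements when they are already related inside $\Gamma$ or inside $\Omega$. Consequently every totally ordered chain $b_0 < b_1 < \cdots < b_n$ in $\Gamma \vee_{(v,v')}\Omega$ is either entirely contained in $\Gamma$ or entirely contained in $\Omega$. This gives an equality of simplicial complexes
\[ \Delta(\Gamma \vee_{(v,v')} \Omega) \;=\; \Delta(\Gamma) \cup \Delta(\Omega), \]
where the intersection $\Delta(\Gamma) \cap \Delta(\Omega)$ is the subcomplex on the two-element subposet $\{*,\bar v\}$, namely the $0$-cells $(*)$ and $(\bar v)$ together with the $1$-cell $(*,\bar v)$.

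Next I would pass to total spaces. The previous paragraph gives
\[ X_{\Gamma\vee_{(v,v')}\Omega} \;=\; X_\Gamma \cup X_\Omega, \]
with the two pieces glued along the closed $1$-simplex realizing $(*,\bar v)$. This $1$-simplex is contractible, and each $X_\Gamma$, $X_\Omega$ is a CW complex containing it as a subcomplex, so collapsing this segment to a point is a homotopy equivalence on each side and hence on the union. After the collapse, $X_\Gamma$ and $X_\Omega$ are identified at a single point, which is by definition $X_\Gamma \vee X_\Omega$.

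There is no real obstacle here; the only point that demands a sentence of care is that the gluing locus is a genuine subcomplex in both $X_\Gamma$ and $X_\Omega$ and is contractible, so that the standard fact that gluing along a contractible CW subcomplex has the homotopy type of a wedge applies. This is exactly the justification for the author's assertion of obviousness, and it is all that the later applications (additivity of reduced cohomology, and hence of $\tilde\chi$ and of the numerical Koszul defect) will require.
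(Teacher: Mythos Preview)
Your argument is correct and is exactly the natural unpacking of why the paper declares this lemma ``obvious'' without proof. One small nuance worth making explicit: the identification you obtain is a homotopy equivalence rather than a homeomorphism (since the overlap is a $1$-simplex, not a point), but as you observe this suffices for the cohomological applications, and the paper's use of ``is'' should be read in that spirit.
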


\begin{lemma}\label{NKD}
Let notation be as above.  Then
$$ \NKD(\Gamma \vee_{(v,v')}\Omega) = \NKD(\Gamma) + \NKD(\Omega).$$
\end{lemma}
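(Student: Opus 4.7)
The plan is to read $\NKD(\Gamma\vee_{(v,v')}\Omega)$ off directly from the Hilbert series formulas in Theorems \ref{HS1} and \ref{HS2}, organizing the sums by the source ($\Gamma$, $\Omega$, or the merged vertex $\bar v$) of the summation index. Writing $\Lambda = \Gamma\vee_{(v,v')}\Omega$, both series are indexed by pairs $(a,i)$ with $a\in\Lambda$ and $1\le i\le rk_\Lambda(a)$, and in each case the summand depends only on the poset $\Lambda_{a,i}$. So additivity of $\NKD$ will follow once every $\Lambda_{a,i}$ is matched with the corresponding $\Gamma_{a,i}$ or $\Omega_{a,i}$.

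First I would verify the structural claim that for any $a\in\Gamma\setminus\{*\}$ one has $rk_\Lambda(a) = rk_\Gamma(a)$ and $\Lambda_{a,i} = \Gamma_{a,i}$, with the symmetric statement for $a\in\Omega\setminus\{*\}$. The point is that the only elements shared by $\Gamma$ and $\Omega$ inside $\Lambda$ are $*$ and the rank-$1$ element $\bar v$, so any chain descending from $a\in\Gamma$ in $\Lambda$ that tried to cross into $\Omega$ would have to pass through $\bar v$; but nothing lies strictly below $\bar v$ other than $*$, so the crossing is impossible. Hence $\Lambda_{a,i}$, its order complex, and every cohomological invariant appearing in the NKD formulas agree with their $\Gamma$ (resp.\ $\Omega$) counterparts.

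Next I would dispose of the merged vertex itself: because $rk_\Lambda(\bar v) = 1$, the only pair involving $\bar v$ is $(\bar v,1)$, and $\Lambda_{\bar v,1}=\emptyset$. The conventions $\dim\tilde H^{-1}(\Delta(\emptyset))=1$ and $\tilde\chi(\Delta(\emptyset))=-1$ give $(-1)^{-1}\dim\tilde H^{-1}(\Delta(\emptyset))=-1=\tilde\chi(\Delta(\emptyset))$, so $(\bar v,1)$ is a good pair and contributes $0$ to $\NKD(\Lambda)$. The same computation shows $(v,1)$ is good in $\Gamma$ and $(v',1)$ is good in $\Omega$.

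Finally I would assemble the identity by splitting the sum defining $\NKD(\Lambda)$ according to whether $a\in\Gamma\setminus\{*,\bar v\}$, $a\in\Omega\setminus\{*,\bar v\}$, or $a=\bar v$. By the first step the first two blocks are exactly the contributions of $\Gamma\setminus\{*,v\}$ and $\Omega\setminus\{*,v'\}$ to $\NKD(\Gamma)$ and $\NKD(\Omega)$; by the second step the third block vanishes, and one may freely append the (zero) contributions from $(v,1)$ and $(v',1)$ to recover the full sums $\NKD(\Gamma)+\NKD(\Omega)$. The only real obstacle is the bookkeeping at $\bar v$: one must check that the merged vertex is counted exactly once in $\Lambda$ and that its single contribution equals the sum of the two separate contributions from $v$ and $v'$ it replaces, and the rank-$1$ hypothesis on both $v$ and $v'$ is precisely what makes that cancellation automatic.
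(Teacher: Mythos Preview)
Your proof is correct and follows essentially the same approach as the paper: both rest on the identical structural observation that $\Lambda_{a,i}=\Gamma_{a,i}$ for $a\in\Gamma$ and $\Lambda_{a,i}=\Omega_{a,i}$ for $a\in\Omega$. The only cosmetic difference is that the paper computes $H(A_\Theta,t)^{-1}$ and $H(R_\Theta,-t)$ separately (each picking up a $+t-1$ correction from double-counting $\bar v$) and then subtracts, whereas you work directly with the $\NKD$ sum over bad pairs and use that $(\bar v,1)$, $(v,1)$, $(v',1)$ are all good to make the overlap vanish.
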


\begin{proof}
To ease the notation, let  $\Theta =  \Gamma \vee_{(v,v')}\Omega$.  For any $a\in \Theta$ we see that $\Theta_{a,i} = \Gamma_{a,i}$ if $a\in \Gamma$ and $\Theta_{a,i} = \Omega_{a,i}$ if $a\in \Omega$.  In particular, $\Theta_{\bar v,1} = \emptyset$.  
Thus, by \ref{HS1}  we have the following decomposition:
$$
\begin{array}{lll} 
H(A_\Theta,t)^{-1} &= 1 + \sum\limits_{a\in \Gamma \atop rk_\Gamma(a) \ge i\ge 1} \tilde\chi(\Delta(\Gamma_{a,i})) t^i\\[22pt]
&\qquad\quad + \sum\limits_{a\in \Omega \atop rk_\Gamma(a) \ge i\ge 1} \tilde\chi(\Delta(\Omega_{a,i})) t^i -  \tilde\chi(\Theta_{\bar v,1})t\\[22pt]
&= H(A_\Gamma,t)^{-1} + H(A_\Omega,t)^{-1} + t - 1\\
\end{array}
$$
Exactly the same calculation for $H(R_\Theta,-t)$ yields
$$ H(R_\Theta,-t) = H(R_\Gamma,-t) + H(R_\Omega, -t) + t - 1$$
Subtracting gives the required equation.
\end{proof}

\begin{lemma}\label{S2XI}  
Let $\Gamma$ be the uniform ranked poset shown to the right below.  Then  $\NKD(\Gamma) = -t^5$.  In particular $\Gamma$ is not Koszul.

\begin{center}
\includegraphics[width=2.3in]{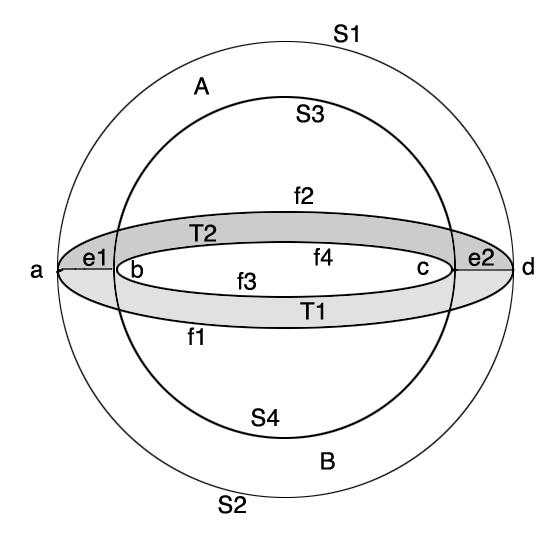}\hskip .2in
\includegraphics[width=2.3in]{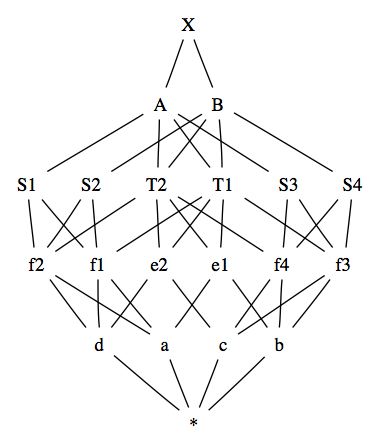}
\end{center}
\end{lemma}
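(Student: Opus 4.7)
The plan is to interpret the left-hand figure as a regular CW-decomposition $\mathbf Z$ of $S^{2}\times I$, and the right-hand figure as the associated poset $\hat\Gamma$ from Example \ref{CW}: the incidence poset of $\mathbf Z$ together with a unique minimum $\ast$ and a unique maximum $X$. Since $\mathbf Z$ is pure of dimension $3$, the element $X$ will have rank $5$ in $\Gamma$, and a direct check using connectedness of $\mathbf Z$ and of the boundary of each of its cells confirms that $\Gamma$ is uniform.

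Next I would evaluate $\NKD(\Gamma)$ by examining each pair $(v,i)$ with $1\le i\le rk_\Gamma(v)$ and deciding whether it is good. For $v\ne X$ the interval $[\ast,v]$ is the face poset (with minimum adjoined) of the closed cell $\bar v$, a disk, and so it is Cohen-Macaulay by Example \ref{CW}. For $v=X$ and $i\le 4$ the subposet $\Gamma_{X,i}$ is a rank-selection of $\bar\Gamma$, which itself is Cohen-Macaulay by Example \ref{CW}. Appealing to the classical theorem (Baclawski-Stanley) that rank-selected subposets of Cohen-Macaulay posets are Cohen-Macaulay, I will conclude that in each of these cases the reduced cohomology of $\Delta(\Gamma_{v,i})$ is concentrated in the top degree $i-2$, so every pair with $(v,i)\ne (X,5)$ is good.

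It then remains only to compute the contribution of $(X,5)$. Since $\Gamma_{X,5}=(\ast,X)$ is the full incidence poset of $\mathbf Z$, its order complex $\Delta(\Gamma_{X,5})$ is the barycentric subdivision of $\mathbf Z$, homeomorphic to $S^{2}\times I$, hence homotopy equivalent to $S^{2}$. In particular $\dim\Delta(\Gamma_{X,5})=3$ but $\dim\tilde H^{j}(\Delta(\Gamma_{X,5}))$ equals $1$ for $j=2$ and $0$ otherwise. Thus
\begin{equation*}
(-1)^{5-2}\dim \tilde H^{3}(\Delta(\Gamma_{X,5}))-\tilde\chi(\Delta(\Gamma_{X,5}))=0-1=-1,
\end{equation*}
so $(X,5)$ is the unique bad pair and contributes exactly $-t^{5}$ to $\NKD(\Gamma)$. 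This yields $\NKD(\Gamma)=-t^{5}\ne 0$, hence $R_\Gamma$ is not numerically Koszul, and in particular not Koszul.

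The main obstacle is the invocation of the Baclawski-Stanley rank-selection principle to handle all pairs $(v,i)\ne (X,5)$ uniformly; once that is established, the remainder is a routine calculation with the cohomology of $S^{2}\times I$.
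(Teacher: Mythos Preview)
Your overall structure is sound and matches the paper's: identify $\Gamma$ as $\hat\Gamma$ for a regular CW model of $S^2\times I$, check that all pairs $(v,i)$ except $(X,5)$ are good, and compute the contribution of $(X,5)$ directly. The treatment of $(X,5)$ and of all pairs $(v,i)$ with $v\ne X$ is fine.

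The gap is in your handling of the pair $(X,4)$. You want to apply the Baclawski--Stanley rank-selection theorem to $\bar\Gamma$, citing Example \ref{CW} for the statement that $\bar\Gamma$ is Cohen--Macaulay. But there are two distinct meanings of ``Cohen--Macaulay'' in play here. The paper's definition (used in Theorem \ref{main2} and Example \ref{CW}) is that every open interval $(a,b)$ has reduced cohomology concentrated in top degree; this is indeed satisfied by $\bar\Gamma$. The Baclawski--Stanley theorem, however, requires that the order complex $\Delta(\bar\Gamma\setminus\{*\})$ be Cohen--Macaulay \emph{as a simplicial complex}, i.e.\ that the link of every face---including the empty face---has homology only in top degree. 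Since $\bar\Gamma$ has no maximal element $\hat 1$, these two conditions are not equivalent, and in fact the second one fails here: $\Delta(\bar\Gamma\setminus\{*\})$ is the barycentric subdivision of your CW complex, hence homeomorphic to $S^2\times I$, which is $3$-dimensional but has $\tilde H^2\ne 0$ and $\tilde H^3=0$. So the rank-selection theorem does not apply, and you have not shown that $\tilde H^n(\Delta(\Gamma_{X,4}))$ vanishes for $n<2$.

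The paper closes this gap by a direct argument specific to the geometry: it applies Lemma \ref{lemma2} to obtain $H^n(\Delta(\Gamma_{X,5}),\Delta(\Gamma_{X,4}))=\bigoplus_{v\in\Gamma(1)}\tilde H^{n-1}(\Delta((v,X)))$, then observes that since $S^2\times I$ is a manifold with boundary and every $0$-cell lies on the boundary, each $\Delta((v,X))$ is a $2$-disc and hence acyclic. The long exact sequence of the pair then forces $\tilde H^n(\Delta(\Gamma_{X,4}))=0$ for $n\ne 2$. You should replace your rank-selection appeal for $(X,4)$ with this (or an equivalent) argument.
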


\begin{proof}
The poset $\Gamma$ is of the form $P \cup \{*,X\}$, where $P$ is the incidence poset of a regular CW complex realization of 
$S^2 \times I$. The picture to the left labels the CW complex $P$ (to the best of our abilities).   Because $P$ is a CW complex, 
the only pairs $(v,i)$ that might be bad are the pairs $(X,5)$ and $(X,4)$.  

The pair $(X,5)$ is bad. To see this note $\Gamma_{X,5} = (*,X) = P$ and hence $||\Delta(\Gamma_{X,5})|| = S^2\times I$.   This 3-dimensional space is homotopic to $S^2$ and has
non-zero reduced cohomology only in degree 2. Hence the pair $(X,5)$ is bad and contributes $-t^5$ to $\NKD(\Gamma)$. 

The pair $(X,4)$ is good. To see this, first apply \ref{lemma2} to get 
$$H^n(\Delta(\Gamma_{X,5}), \Delta(\Gamma_{X,4}))= \bigoplus_{v\in \Gamma(1)}\tilde H^{n-1}(\Delta(\,(v,X)\,)).$$  
Since $S^2\times I$ is a manifold with boundary 
and each $0$-cell in $P$ is on the boundary, the spaces $\Delta(\,(v,X)\,)$ are all homeomorphic to $2$-discs and thus have 
no reduced cohomology. I.e. $H^n(\Delta(\Gamma_{X,5}), \Delta(\Gamma_{X,4}))=0$ for all $n$.  Since $\tilde H^n(\Delta(\Gamma_{X,5})) = 0$ for all $n\ne 2$, the usual long exact sequence for relative cohomology tells us $\tilde H^n(\Delta(\Gamma_{X,4})) = 0$ for all $n\ne 2$.  Thus $(X,4)$ is good.

This proves the lemma.
\end{proof}  

\begin{lemma}\label{S1}
Let $\Omega$ be the the uniform ranked poset shown below.  Then $\NKD(\Omega) = t^5$.  In particular $\Omega$ is not Koszul. 
\begin{center}
\includegraphics[width=1in]{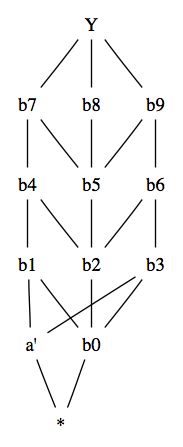}
\end{center}

\end{lemma}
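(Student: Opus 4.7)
The plan is to mirror the proof of Lemma \ref{S2XI} closely. First, I identify $\Omega$ as $Q \cup \{*, X\}$, where $Q$ is the incidence poset of a regular $3$-dimensional CW complex whose total space $W$ is homotopy equivalent to $S^1$ (as the figure label ``S1-3D'' suggests). Since $Q$ is the incidence poset of a regular CW complex, Example \ref{CW} together with the remark immediately preceding Theorem \ref{rank4} guarantee that every pair $(v, i)$ with $v \ne X$ is automatically good. Hence the only pairs that can contribute to $\NKD(\Omega)$ are $(X, 4)$ and $(X, 5)$.

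For the pair $(X, 5)$, I observe that $\Omega_{X, 5} = (*, X) = Q$, so $||\Delta(\Omega_{X, 5})||$ is homotopy equivalent to $W \simeq S^1$. Hence its reduced cohomology is $\F$ in degree $1$ and zero in all other degrees, giving $\tilde\chi(\Delta(\Omega_{X,5})) = -1$ and $\dim \tilde H^{3}(\Delta(\Omega_{X,5})) = 0$. The contribution of this pair to $\NKD(\Omega)$ is therefore
$$\bigl[(-1)^{3} \cdot 0 - (-1)\bigr] t^5 = t^5.$$

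For the pair $(X, 4)$, I would apply Lemma \ref{lemma2} exactly as in the proof of \ref{S2XI} to obtain
$$H^n\bigl(\Delta(\Omega_{X, 5}),\, \Delta(\Omega_{X, 4})\bigr) \;=\; \bigoplus_{v \in \Omega(1)} \tilde H^{n-1}\bigl(\Delta((v, X))\bigr),$$
and then feed this into the long exact cohomology sequence of the pair, using the known cohomology of $\Delta(\Omega_{X,5}) \simeq S^1$, to pin down $\tilde H^n(\Delta(\Omega_{X, 4}))$. The main obstacle is the cell-by-cell verification that, for the specific CW structure in the figure, this forces $\tilde\chi(\Delta(\Omega_{X, 4})) = \dim \tilde H^2(\Delta(\Omega_{X, 4}))$, i.e., that $(X, 4)$ is good; unlike the $S^2 \times I$ case of \ref{S2XI}, where each $(v, X)$ was a $2$-disc and the relative cohomology vanished outright, here the intervals $(v, X)$ must be arranged so that the degree-$1$ class of $S^1$ is absorbed through the long exact sequence and the remaining reduced cohomology lands in degree $2$. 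Combining this with the contribution $t^5$ from $(X, 5)$ yields $\NKD(\Omega) = t^5$ as claimed, and since $\NKD(\Omega) \ne 0$ the algebra $R_\Omega$ fails to be numerically Koszul and is therefore not Koszul.
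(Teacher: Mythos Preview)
Your overall strategy---isolate the potentially bad pairs and compute their contributions---and your treatment of the top pair match the paper exactly: $\Delta(\Omega_{Y,5})=\Delta((*,Y))$ (the paper calls the maximal element $Y$, you call it $X$) is $3$-dimensional but homotopic to $S^1$, so this pair is bad and contributes precisely $t^5$.

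The divergence is in handling the pairs $(v,4)$. The paper does not assume that $(*,Y)$ is the incidence poset of a regular CW complex, and it never invokes Lemma~\ref{lemma2} or a long exact sequence here. It simply records that \emph{by direct inspection every $\Omega_{v,4}$ is contractible}, so every $(v,4)$---including $(Y,4)$---is good at one stroke. Your argument, by contrast, splits into two parts: (i) you appeal to a putative regular CW structure to dispose of $(v,4)$ for $v\ne Y$, and (ii) for $(Y,4)$ you outline a relative-cohomology computation that you explicitly leave unfinished (``the main obstacle is the cell-by-cell verification\ldots''). That is a genuine gap: you have not actually shown $(Y,4)$ is good, only sketched a strategy for doing so. It is also worth noting that the paper's proof opens by observing that the open interval $(a',Y)$ (with $a'\in\Omega(1)$) is disconnected as a graph; this should make you cautious about assuming a clean regular CW structure on $(*,Y)$, since the poset of cells above $a'$ is then disconnected.

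One small further difference: the paper deduces ``not Koszul'' \emph{first}, directly from the disconnected interval $(a',Y)$ via the Cohen--Macaulay criterion of Theorem~\ref{main2}, rather than as a consequence of $\NKD(\Omega)\ne 0$ at the end. Your route is logically fine, but only once the $\NKD$ computation is actually completed.
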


\begin{proof}  We see at once that $\Omega$ is not Koszul because the interval $(a',Y)$ is not connected as a graph.  By 
direct inspection, every pair $(v,4)$ is good, because every pair $(v,4)$ corresponds to a contractible space with no 
reduced cohomology. This leaves $(Y,5)$ as the only pair that can be bad.   Since $\Delta(\Omega_{Y,5}) = \Delta((*,Y))$ is 3-dimensional but homotopic to $S^1$ (by inspection), the pair $(Y,5)$ is bad and contributes exactly $t^5$ to $\NKD(\Omega)$, as claimed. 
 \end{proof} 
 
We now see easily that a numerically Koszul algebra in our class need not be Koszul, as promised in Theorem \ref{intro3}. 

\begin{thm}\label{main3}  Let $\Gamma$ be as in \ref{S2XI} and $\Omega$ as in \ref{S1}.  Set $\Theta = \Gamma \vee_{(a,a')}\Omega$.  Then the algebra $R_\Theta$ is not Koszul, but it is numerically Koszul.
\end{thm}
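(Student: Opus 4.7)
The plan is to exploit the additivity of the numerical Koszul defect under the wedge construction, together with the inductive characterization of the Koszul property via intervals $\Theta_z$.

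First, I would verify that $\Theta$ falls under the machinery developed above: it inherits uniformity from $\Gamma$ and $\Omega$ (as noted immediately after the wedge definition), so Theorem \ref{useful} and Lemma \ref{NKD} both apply.

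For the failure of Koszulity, the key observation is that the maximal element $Y$ of $\Omega$ remains a maximal element of $\Theta$, and the interval $\Theta_Y$ coincides with $\Omega$. Indeed, any chain of covering relations in $\Theta$ starting at $Y$ stays inside $\Omega$: the only element of $\Omega$ that is identified with anything of $\Gamma$ is $a' \sim a$ of rank $1$, and the minimal element; so every $w \le Y$ in $\Theta$ is already $\le Y$ in $\Omega$. Hence $\Theta_Y = \Omega$. Since Lemma \ref{S1} tells us $\Omega$ is not Koszul, Theorem \ref{useful} implies that $\Theta$ is not Koszul either.

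For the numerical Koszulity, I would invoke Lemma \ref{NKD} directly, which gives
\[
\NKD(\Theta) \;=\; \NKD(\Gamma) + \NKD(\Omega) \;=\; -t^5 + t^5 \;=\; 0,
\]
using the computations in Lemmas \ref{S2XI} and \ref{S1}. Unwinding the definition of $\NKD$, this yields $H(R_\Theta,-t) = H(A_\Theta,t)^{-1}$, and since $A_\Theta$ and $A'_\Theta = R_\Theta^!$ share the same Hilbert series for uniform $\Theta$, we conclude $H(R_\Theta,-t) \cdot H(R_\Theta^!,t) = 1$, i.e. $R_\Theta$ is numerically Koszul.

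There is essentially no obstacle here: all the hard work has been absorbed into the preceding lemmas. The only point requiring care is verifying $\Theta_Y = \Omega$ (and symmetrically that the wedge identification does not accidentally enlarge the intervals $\Theta_Y$ and $\Theta_X$), which is a direct consequence of the fact that the wedge identifies only $a \sim a'$ and the minimal elements, neither of which lies above any non-minimal element.
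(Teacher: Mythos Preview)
Your proof is correct and essentially the same as the paper's. The numerical Koszulity argument is identical (Lemma \ref{NKD} plus the two computed defects), and for non-Koszulity you use Theorem \ref{useful} with $\Theta_Y=\Omega$, while the paper invokes Theorem \ref{main2} by noting that the intervals $(*,Y)$ and $(*,X)$ witness the failure of Cohen--Macaulayness; these are equivalent one-line observations that the defect of $\Omega$ (respectively $\Gamma$) persists in $\Theta$.
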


\begin{proof}
The intervals $(*,Y)$ and $(*,X)$ both show that the poset $\Theta$ is not Cohen-Macauley.  Hence by \ref{main2}, $R_\Theta$ is not Koszul.

By \ref{NKD}, \ref{S2XI} and \ref{S1}, $\NKD(\Theta) = \NKD(\Gamma) + \NKD(\Omega) = -t^5 + t^5 = 0$.  Thus $\Theta$ is numerically Koszul. 
\end{proof}

It is clear that any number of examples could be constructed in this fashion, but the resulting examples are not very satisfying, as they are far from being cyclic.  Fortunately, through much more ad-hoc methods we were able to obtain the following cyclic example.

\begin{thm}\label{wild}
Let $\Gamma$ be the uniform ranked poset shown below.  Then $\Gamma$ is numerically Koszul, but not Koszul.
\begin{center}
\includegraphics[width=4in]{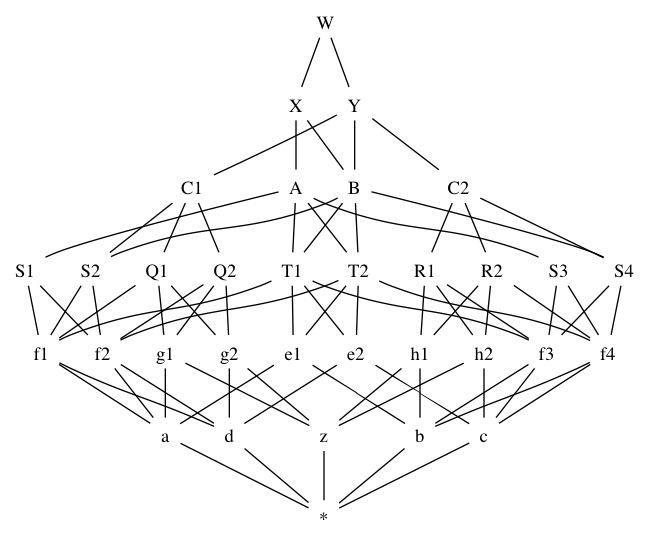}
\end{center}
\end{thm}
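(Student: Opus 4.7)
The plan is to analyze the specific cyclic poset $\Gamma = \Gamma_{\hat 1}$ displayed in the figure, where $\hat 1$ denotes the unique maximal element. The proof breaks into two essentially disjoint parts: verify non-Koszulity via Theorem~\ref{main2}, and verify numerical Koszulity via a direct accounting of bad pairs in the $\NKD$ formula.

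\textbf{Not Koszul.} By Theorem \ref{main2} it suffices to locate some interval $(a,b) \subset \Gamma$ whose order complex has non-zero reduced cohomology in a degree strictly below its dimension, i.e.~fails the Cohen-Macaulay condition. Reading off the Hasse diagram, I would single out an interval of the form $(*,v)$ or $(a,\hat 1)$ where the subposet is visibly badly behaved — for example, disconnected as a graph (producing a class in $\tilde H^0$) or homotopy-equivalent to a sphere of too-small dimension (compare the argument of Lemma \ref{S1}, where $(a',Y)$ is disconnected, and of Example \ref{ex2}, where $(*,Z)$ is homotopic to $S^1$ while being $2$-dimensional). Once one such interval is identified by inspection, Theorem \ref{main2} rules out Koszulity.

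\textbf{Numerically Koszul.} Apply the defect formula
$$\NKD(\Gamma) = \sum_{(v,i)\ \text{bad}} \Bigl[(-1)^{i-2}\dim\tilde H^{i-2}(\Delta(\Gamma_{v,i})) - \tilde\chi(\Delta(\Gamma_{v,i}))\Bigr] t^i.$$
Because $\Gamma$ is uniform, the pairs $(v,1), (v,2), (v,3)$ are automatically good, and $(v,4)$ is good iff $\tilde H^1(\Delta(\Gamma_{v,4})) = 0$. So only finitely many pairs $(v,i)$ with $4 \le i \le \mathrm{rk}(v)$ need to be checked. For each such pair I would build the subposet $\Gamma_{v,i} = \Gamma^{>\mathrm{rk}(v)-i} \cap (*,v)$, realize its order complex as a simplicial complex, and compute the reduced cohomology (and hence the Euler characteristic) by inspection. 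The bad pairs are exactly the ones contributing a non-zero term to $\NKD(\Gamma)$, and the claim is that their contributions, collected by degree $t^i$, cancel identically.

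\textbf{Main obstacle.} The hard part is purely combinatorial-topological bookkeeping on the specific diagram. In the wedge construction of Theorem~\ref{main3} cancellation was forced structurally by the additivity of $\NKD$ under $\vee$, so the existence of a cancelling example was essentially inevitable. Here, by contrast, the cancellation is a coincidence of the particular geometry of this cyclic $\Gamma$: the defect at the bad pair responsible for non-Koszulity (say at $(\hat 1, \mathrm{rk}(\hat 1))$) must be matched, degree for degree, by defects at other bad pairs of the same $i$ but opposite sign. Verifying this requires constructing every relevant $\Delta(\Gamma_{v,i})$, identifying its homotopy type, and tabulating $\tilde\chi$ against $(-1)^{i-2}\dim \tilde H^{i-2}$; the numerical coincidence that makes $\NKD(\Gamma) = 0$ is precisely what distinguishes this example from Theorem \ref{main3} and explains why it was found only by ad-hoc search.
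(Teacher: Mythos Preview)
Your outline is exactly the paper's approach: exhibit a non--Cohen-Macaulay interval and invoke Theorem~\ref{main2} for non-Koszulity, then tabulate the pairs $(v,i)$ with $i\ge 4$ in the $\NKD$ formula and show the bad contributions cancel. One correction to your intuition, though: in this example the unique maximal element $W$ has rank $6$, but every pair $(W,n)$ turns out to be \emph{good} (each $\|\Delta(\Gamma_{W,n})\|$ is contractible, via a cone argument), so the defect does not sit at $(\hat 1,\mathrm{rk}(\hat 1))$ as you guess. The non-Koszulity witness is instead the disconnected interval $(z,Y)$ for a rank-$1$ element $z$ and a rank-$5$ element $Y$, and the only bad pairs are $(X,5)$ and $(Y,5)$ at the two rank-$5$ elements: $[*,X]$ is the $S^2\times I$ poset of Lemma~\ref{S2XI} (contributing $-t^5$), while $(*,Y)$ is the incidence poset of a $3$-complex homotopic to $S^1$ (contributing $+t^5$). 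The delicate step is verifying that $(Y,4)$ remains good despite $(z,Y)$ being disconnected; the paper handles this with Lemma~\ref{lemma2} and the long exact sequence of the pair $(\Delta(\Gamma_{Y,5}),\Delta(\Gamma_{Y,4}))$.
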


\begin{proof}

We first list several useful observations about $\Gamma$. $[*, X]$ is the poset from \ref{S2XI}. In particular, $(*, X)$ is the incidence poset of a regular CW complex realization of $S^2 \times I$; we will refer to this CW complex as $\| X \|$. Let $\| Y \|$ be the three-dimensional CW complex drawn (to the best of our abilities) below. $\| Y \|$ has three $3$-cells $B$, $C1$, and $C2$ and we note that $B$ is also a $3$-cell of $\| X \|$. $\| Y \|$ has a singular point at $z$ and is homotopic to $S^1$. $(*, Y)$ in $\Gamma$ is the incidence poset of $\| Y \|$. $(*, W)$ in $\Gamma$ is not the incidence poset of a CW complex. 

\begin{center}
\includegraphics[width=4in]{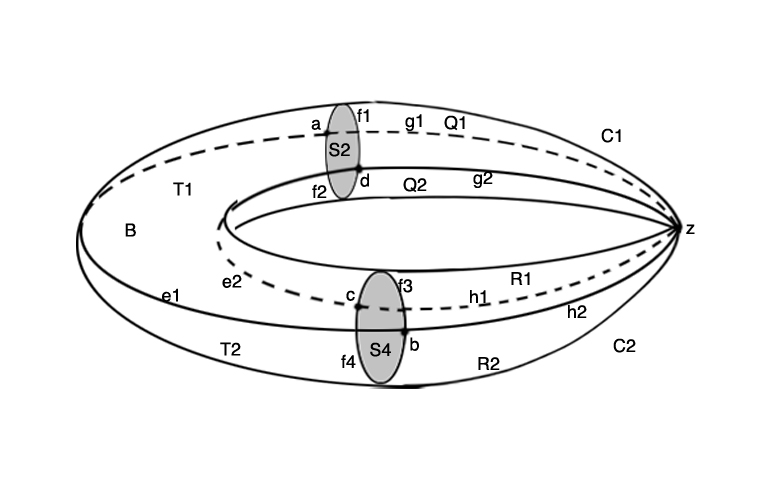}
\end{center}

By construction, $\Gamma$ is uniform. $\Gamma$ is not Cohen-Macaulay because the open interval $(z, Y)$ is not connected as a graph, By \ref{main2}, $R_\Gamma$ is not Koszul.

To see that $\Gamma$ is numerically Koszul, we need to examine the pairs $(A,4)$, $(B,4)$, $(C1,4)$, $(C2, 4)$, $(X,4)$, $(X,5)$, $(Y,4)$, $(Y, 5)$, $(W, 4)$, $(W, 5)$, and $(W, 6)$. 

Due to the regular CW structures, the realizations of the order complexes of $\Gamma_{A, 4}$, $\Gamma_{B, 4}$, $\Gamma_{C1, 4}$, and $\Gamma_{C2, 4}$ are each homeomorphic to $2$-spheres. We conclude $(A,4)$, $(B,4)$, $(C1,4)$, and $(C2, 4)$ are good. \ref{S2XI} tells us $(X, 4)$ is good and $(X, 5)$ is bad. $(Y, 5)$ is bad because $\| \Delta(\Gamma_{Y,5})\|$ is homotopic to $S^1$. 

Before calculating the remaining pairs, we will state a useful observation. Let $V$ be a topological space and let $U_1, U_2$ be closed subsets of $V$ such that $U_1 \cup U_2 = V$. If $U_1 \cap U_2$ is contractible, then $\text{cone}(U_1) \cup \text{cone}(U_2)$ is contractible. 

Claim: $(Y, 4)$ is good. We apply \ref{lemma2} to get 
$$(**)  \qquad H^n(\Delta(\Gamma_{Y,5}), \Delta(\Gamma_{Y, 4}))= \bigoplus\limits_{t\in \Gamma_Y(1)} \tilde H^{n-1}(\Delta((t,Y)))$$
for all $n \geq 0$. The intervals $(a, Y), (d, Y), (b, Y)$ and $(c, Y)$ are isomorphic as posets and we can use the above useful observation to see that the order complex of each interval is contractible. 
The realization of the order complex of $(z, Y)$ is homotopic to $S^0$. From (**), we conclude that $H^n(\Delta(\Gamma_{Y, 5}), \Delta(\Gamma_{Y, 4}))$ is one-dimensional for $n = 1$ and is zero otherwise. 

We now apply the standard long exact cohomology sequence related to relative cohomology for $\Delta(\Gamma_{Y, 5})$ and $\Delta(\Gamma_{Y, 4})$. Recalling that $H^n(\Delta(\Gamma_{Y, 5}))$ is one-dimensional for $n = 1$ and is zero otherwise, we see that $H^1(\Delta(\Gamma_{Y, 4})) = 0$ as required. 

$(W, n)$ is good for all $4 \leq n \leq 6$ because $\| \Delta(\Gamma_{W, n}) \|$ is contractible by the above useful observation.

Finally, we compute
$$\begin{array}{ll}
 \NKD(\Gamma) & = \sum\limits_{(V,i)\  bad} \left[ (-1)^{i-2}\dim\tilde H^{i-2} (\Delta(\Gamma_{V,i})) - \tilde\chi(\Delta(\Gamma_{V,i}))\right] t^i \\[15pt]
 & = [(-1)^{3}\dim\tilde H^{3}(\Delta(\Gamma_{X,5}))
- \tilde\chi(\Delta(\Gamma_{X,5})) ] t^5 \\[10pt]
 & + [(-1)^{3}\dim\tilde H^{3}(\Delta(\Gamma_{Y,5}))
- \tilde\chi(\Delta(\Gamma_{Y,5}))] t^5 \\[10pt]
& = [- \tilde\chi(\Delta(\Gamma_{X,5})) - \tilde\chi(\Delta(\Gamma_{Y,5}))]t^5 = [(-1) - (-1)] t^5 = 0.
\end{array} $$

Thus $R_{\Gamma}$ is numerically Koszul and this completes our proof. 
\end{proof}

\bibliographystyle{amsplain}
\bibliography{bibliog}
\end{document}